\newtheorem{theorem}{Theorem}[section]
\newtheorem{thm}[theorem]{Theorem}
\newtheorem{prop}[theorem]{Proposition}
\newtheorem{lem}[theorem]{Lemma}
\newtheorem{cor}[theorem]{Corollary}
\theoremstyle{definition}
\theoremstyle{remark}
\newcommand{\N}{\mathbb{N}}
\newcommand{\Z}{\mathbb{Z}}
\newcommand{\Q}{\mathbb{Q}}
\newcommand{\R}{\mathbb{R}}
\newcommand{\gp}{{\mathbb  F}_{p}}             
\newcommand{\pea}{{\mathbb  Z}_{p}}           
\newcommand{\ffpea}{{\mathbb  Q}_{p}}   	
\newcommand{\zhat}{\widehat{\mathbb  Z}}          
\newcommand{\ff}{\varphi}                   
\newcommand{\al}{\alpha}                    
\newcommand{\be}{\beta}                     
\newcommand{\sub}{\subseteq}    
\newcommand{\st}{\;|\;}
\renewcommand{\o}{\mathcal{O}}
\renewcommand{\P}{\mathbb{P}}
\renewcommand{\d}{\mathbb{D}}
\newcommand{\ad}{\mathbb{A}}
\newcommand{\adq}{\mathbb{A}^{\rm{f}}_{\Q}}
\DeclareMathOperator{\Min}{Min}
\DeclareMathOperator{\Ker}{Ker}
\DeclareMathOperator{\Max}{Max}
\DeclareMathOperator{\spec}{Spec}
\newcommand{\tv}[1]{\llbracket {#1} \rrbracket} 
\newcommand{\fpp}{{\mathbb{F}_{2}}^{I}}
\newcommand{\cp}{\mathcal{P}}
\global\long\def\fp{\mathfrak{p}}
\global\long\def\fq{\mathfrak{q}}
\global\long\def\fm{\mathfrak{m}}
\global\long\def\fa{\mathfrak{a}}
\global\long\def\fb{\mathfrak{b}}
\global\long\def\id{\mathrm{id}}
\title[$\zhat$ and the ring of finite Ad\`eles over the $\Q$]{ Some model-theoretic perspectives on the structure sheaves  of $\zhat$ and the ring of finite ad\`eles over $\Q$}
\author{Paola D'Aquino}
\address{Dipartimento di  Matematica e Fisica, Universit\`a della Campania L.Vanvitelli, 
81100 Caserta, Italy. 
E-mail: paola.daquino@unicampania.it}
\author{Angus J. Macintyre}
\address{School of Mathematics, University of Edinburgh, EH9 3FD Edinburgh, U.K. E-mail: a.macintyre@qmul.ac.uk}
\author{Margarita Otero}
\address{Departamento de Matem\'aticas, Universidad Aut\'onoma de Madrid, 28049 Madrid, Spain. 
E-mail: margarita.otero@uam.es}
\thanks{The first author is partially supported by Programma Valere 2019 of Universit\`a della Campania "L. Vanvitelli", the third author is partially supported by  Spanish MTM2017-82105 and GR15/17}
\subjclass{13L05, 03C20}
\keywords{Finite ad\`eles over $\Q$,  $\zhat$,  sheaves, ultraproducts}
\date{16 February 2020}
\begin{document}
\begin{abstract} We use the classical Ax-Kochen-Ershov  analysis of the model theory of  Henselian fields to bring out some model-theoretical aspects of the structure sheaf of  the spectrum of $\zhat$ and the ring of finite ad\`eles over $\Q$. We show that various structures associated to a prime ideal, such as quotients and localizations, are well understood model-theoretically, and they  are closely connected to   ultrafilters on the set of standard primes.

\end{abstract}
\maketitle

\section{Notation and Basic Notions}\label{basic}

We will use the following notation: 

$\mathbb{P}$ = the set of prime numbers; 

$\pea$ = the ring of $p$-adic integers;   

$\ffpea$ = the field of $p$-adic numbers;

$\mu_p$ = maximal ideal of $\pea$, and 
\[
\zhat=\prod_{p\in\mathbb{P}}\pea=\{f\colon\mathbb{P}\to\bigcup_{p\in\mathbb{P}}\pea\;|\;f(p)\in\pea\mbox{ for all }p\in\mathbb{P}\}.
\]
The ring $\zhat$ is a subring of $\prod_{p\in\mathbb{P}}\ffpea$, and the ring of finite ad\`eles  is the intermediate ring $$\adq=\{g\in\prod_{p\in\mathbb{P}}\ffpea|\;g(p)\in\pea\mbox{ for  cofinitely many  }p\in\mathbb{P}\}.$$
Note that $\adq$ is the localization of $\zhat$ at the multiplicative set of the positive integers $\Z^+$  (diagonally embedded in $\zhat$).

Note that $\Q$ is embedded in $\adq$ via the diagonal map and we
have $\adq\cong\zhat_{\Z^{*}}\cong\zhat\otimes\Q$. 

We consider the following topology on $\adq$. First of all, we recall
that, for each $p\in\P$, we have the $p$-adic norm on $\Q_{p}$
and both $\pea$ and $\Q_{p}$ are complete for the corresponding
metric, and the induced topology makes $\pea$ compact and $\Q_{p}$
locally compact (see, \emph{e.g.},\,\cite{gouvea}). For every
finite subset $S\subseteq\P$, let $\mathbb{A}(S)=\prod_{p\not\in S}\pea\times\prod_{p\in S}\Q_{p}$
endowed with the product topology and we define, for $U\subseteq\adq$,
$U$ open in $\adq$ if $U\cap\mathbb{A}(S)$ is an open subset of
$\mathbb{A}(S)$ for every finite $S\subseteq\P$. Note that $\zhat$
in open in $\adq$ and its induced topology coincides with the product
topology, hence $\zhat$ is a compact subring of $\adq$ and  so $\adq$
is locally compact (see  \cite{cassels}).

\

We use \cite[Chapter 6]{bosch} for definitions and formalism around $\spec$ and the structure sheaf. Let $R$  be a commutative ring with 1. $\Max R$  and $\Min R$ are respectively the set of maximal and minimal elements of $\spec R$  (under $\sub$).

$\spec(R)$ is the space of all prime ideals $\fp$
of $R$ with basic open sets $D(f)$, $f\in R$, where $D(f)=\{\fp\colon f\not\in\fp\}.$ $D(f)$ does not determine $f$. To $D(f)$ the structure sheaf $\mathcal{O}_R$ assigns $R_f$, the localization of $R$ at the powers of $f$. The issue of monotonicity is resolved by considering $$S(f)=\{g\in R\colon D(f)\sub D(g)\}$$ and showing that $S(f)$ is a multiplicative system  with $R_{S(f)}$  canonically isomorphic to $R_f$. The sheaf {on the category of basic opens} is extended to the structure sheaf of $\overline{\mathcal{O}}_R$ on $\spec(R)$, and we have the associated notion  of stalk.

In general, for  a ring $R$ contained in  a product of rings   $\prod_{i\in I}R_i$, an element $f\in R$ and  a (first-order) property $\ff$ of elements of a ring
we will extensively use the following classical  notation
$$
\tv{\ff(f)}=\{i\in I\st\ff(f(i))\text{ is true in }R_i\}.
$$

\section{Products of fields}

We first consider the simplest case $\fpp$, where $I$ is a set.
The power set of $I$ as a Boolean ring  is to be construed as  $\fpp$, and of course we identify $X\sub I$ with its characteristic function $e_X$  (where $e_X(i)=1$ if $i\in X$ and $e_X(i)=0$ if $i\not\in X$). Prime ideals in $\fpp$ are maximal. $\fpp$ is a von Neumann regular ring since all elements are idempotent.
 Ideals are exactly of the form $\{1-e_X\colon X\in \mathcal{F}\}$ with $\mathcal{F}$ a filter on $ I$,  and maximal ideals correspond to maximal filters, \emph{i.e.}, ultrafilters.

$\be I$, the Stone-$\check{\mathrm{C}}$ech  compactification of $ I$, is the set of ultrafilters on $ I$, topologized by taking as basic open sets the 
$\{\d\in\be I\colon Y\not\in\d\}$, for $Y$  a subset of $ I$. $\be I$ is compact Hausdorff, and is homeomorphic to $\spec(\fpp)$ by above description of the ideals of $\fpp$.

Open sets in $\be I$ are (by definition) of the form $\{\d\colon a\not\in\d\text{ for some }a\in \mathcal{A}\}$ where $ \mathcal{A}$ is a subset of the power set of  $ I$. This is the same as $\{\d\colon  I\setminus a\in\d\text{ for some }a\in \mathcal{A}\}$  and so  $\{\d\colon \mathcal{F}\sub\d\}$ where $ \mathcal{F}$ is the filter generated by all $ I\setminus a$, $a\in\mathcal{A}$.  So open sets in $\be I$ corresponds to filters \cite{comfort}.

Let $R=\fpp$. Consider a basic open set $D(f)$ in $\spec(R)$, for some $f\in R$. $\o(D(f))=R_f$. The multiplicative system is $\{f\}$ since $f$ is idempotent, and the kernel of the localization map $R\to R_f$ is  $\{g\in R\colon gf=0\}$, \emph{i.e.} the ideal  generated by $1-f$. Since $f$ maps to 1, \begin{equation}\label{F2}
R_f\cong R/(1-f)\cong {\mathbb{F}_2}^{\tv{f\not=0}}.
\end{equation}
Now we calculate 
 $$\overline{\o}(U)= \underset{\underset{D(f)\sub U}{f\in R,}}{\varprojlim} R_f,$$ for a general $U\sub\spec(R)$. 
  Because of $D(f)\cap D(g)=D(fg)$, for every $f,g\in R$, we have a directed system consisting of the $D(f)\sub U$. When $D(g)\sub D(f)$ localization gives a connecting morphism $$R_f\cong R_{S(f)} \longrightarrow R_{S(g)}\cong R_g,$$ and the preceding (see (\ref{F2})) discussion shows that $R_f \to R_g$ is the canonical  restriction ${\mathbb{F}_2}^{{\tv{f\not=0}}}\to {\mathbb{F}_2}^{{ \tv{g\not=0}}}$    (the condition ${\tv{g\not=0}}\sub{\tv{f\not=0}}$ is equivalent to $f\,|\,g$ since $f$ and $g$ are idempotents).
  
  Now, finally we connect to the discussion of open sets in {$\spec(\fpp)$}. Corresponding to $U$ we have the projective system of $R_f$, for $D(f)\sub U$ in exact ring-theoretic correspondence with the projective system  of maps $${\mathbb{F}_2}^{{\tv{f\not=0}}}\to {\mathbb{F}_2}^{{\tv{g\not=0}}},  \text{ for }f\,|\,g.$$

For each $U\sub \spec(\fpp)$, you can consider the filter $\mathcal{F}$ on $ I$ generated by the set $\{\tv{f\not=0}\colon D(f)\sub U\}$, and the limit of the projective system of $R_f$ is naturally isomorphic to the reduced product $\prod_\mathcal{F}{\mathbb{F}_2}$. 

 The correspondence $U\longleftrightarrow \mathcal{F}$ is natural, and functorially  $$\textstyle\overline{\o}(U)\cong\prod_\mathcal{F}{\mathbb{F}_2}.$$
The stalk at a point $\fp$ is, by essentially the same argument, identify as $\prod_\d{\mathbb{F}_2}$ where $\d$ is the ultrafilter of all $\tv{f=0}$, $f\in\fp$. Then of course the stalk is ${\mathbb{F}_2}$.

\

In the remainder of the paper we replace ${\mathbb{F}_2}^ I$ by subrings of  $\prod_{i\in I}R_i$, for different classes of rings $R_i$. 

\

 We now work with $R=\prod_{i\in I}K_i$, where the $K_i$ are fields, for instance $R=\prod_{p\in\P}\ffpea$. We subsume the case $\fpp$ discussed above. 
 $R$ is von Neumann regular, \emph{i.e.} every principal ideal is generated by an idempotent. The idempotents form a Boolean algebra (and thus a Boolean ring, though not a subring of $R$).

 Prime  ideals of $R$ are maximal. Ideals correspond exactly to {filters} on $I$, in the sense that any ideal is {uniquely} of the form $\{e_{I\setminus X}: X\in \mathcal{F}\}$, where $\mathcal{F}$ is a filter. Now not every element is an idempotent (only a unit times an idempotent). Note that maximal ideals correspond exactly to ultrafilters and principal ideals to principal filters. 

$\spec(R)$ is homeomorphic to $\be I$, the space of ultrafilters on $I$.
 {But of course the stalks and sections depend on the $K_i$}.

As for $\fpp$, $\overline{\o}(U)$ is a reduced product $\prod_\mathcal{F}K_i$, with $\mathcal{F}$ a filter on $I$ (and the correspondence is exactly as for $\fpp$). Again the stalks correspond exactly to ultraproducts $\prod_\d K_i$, $\d$ a ultrafilter. For $\fp\in\spec R$, both $R_\fp$ and $R/\fp$ are isomorphic to the ultraproduct $\prod_\d K_i$.

\section{Products of local domains and  valuation rings}\label{prodvaldom}
Now we pass from products of fields $R=\prod_{i\in I}K_i$ to 

\begin{itemize}

  \item[(a)] Product of local domains $R=\prod_{i\in I}R_i$,
  
\item[(b)] Product of valuation rings $R=\prod_{i\in I}V_i$.

\end{itemize}

The main example for us is $R=\prod_{p\in\P}\pea$ (\emph{i.e.} $\zhat$). In both cases 
$R$ is  no longer von Neumann regular in general. For example, the ideal generated by $f$ in $\prod_{p\in\P}\pea$, where $f(p)=p$, for $p\in\P$, is not generated by an idempotent. 

\
We do not reach decisive results about $\overline{\o}(U)$ now, but we can get precise information about quotients and localizations. 

\

a) Recall that a local ring $S$ is a commutative ring with a unique maximal ideal $\mu$. $\mu$ is first-order definable as the set of nonunits of $S$, the quotient $S/\mu$, the residue field of $S$, is interpretable in $S$.  So by the \L os theorem, the class of local rings is closed under ultraproducts, and both  $\mu$ and $S/\mu$ commute with ultraproducts:

If the $R_i$ are local rings, for $i\in I$, $\mu(R_i)=\mu_i$ are the corresponding maximal ideals, $k(R_i)=R_i/\mu_i$ the corresponding residue fields, and $\d$ an ultrafilter on $I$ then $\prod_{\d}R_i$ is a local ring whose maximal ideal is $\mu(\prod_{\d}R_i)=\prod_{\d}\mu_i$ and residue field $k(\prod_{\d}R_i)=\prod_{\d}R_i/\mu_i$.  Let 
$$\pi_{\d}:R\to\textstyle\prod_{\d}R_i:f\mapsto f_{\d}$$
be the canonical ring homomorphism onto the ultraproduct $\prod_{\d}R_i$.

Now we define two maps from the set of ultrafilters $\be I$ to $\spec(R)$.

 Let $\d\in\be I$, and define 
\[
\d_{*}:=\{f\in R:\tv{f=0}\in\d\}(= \{f\in R:\{i\in I:f(i)=0\}\in\d\}).
\]
and
\[
\d^{*}:=\{f\in R:\tv{f\in\mu}\in\d\}(=\{f\in R:\{i\in I:f(i)\in\mu_i\}\in\d\}).
\]
We see below that $\d_{*}$  and $\d^*$  are  prime ideals and belong to $\Min R$ and $\Max R$,  respectevely.

Conversely, starting with   any proper ideal $\fa $  of $ R$, we define $$\d_{\fa}:= \{\tv{f\in\mu}\colon f\in \fa\}.$$

\begin{lem}
\label{monotonicity}Let $f\in R$ and  $\fa $ a proper ideal of $R$. 

$(1)$    For any $f\in R$, $\tv{f\in\mu}=\emptyset$ iff $f$ is a unit of $R$.

$(2)$  For any $f\in R$,  $1-e_{\tv{f\in\mu}}\in fR$. 

$(3)$  $\d_{\fa}$ is a  proper filter on $I$. 

$(4)$  Let $\fb$ be a proper ideal of $R$. If $\fa\sub \fb$ then $\d_{\fa}\sub\d_{\fb}$. 

$(5)$ $\d_{\fp}$ is an ultrafilter, for any $\fp\in\spec R$. 
\end{lem}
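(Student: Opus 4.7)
My plan is to work through the five parts in order, using throughout the coordinate-wise description of units in $R=\prod_i R_i$ together with the algebra of characteristic functions $e_X$.

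For (1), the equivalences ``$f$ a unit in $R$'' iff ``every $f(i)$ a unit in $R_i$'' iff ``every $f(i)\notin\mu_i$'' give the claim immediately. For (2), setting $X=\tv{f\in\mu}$ I define $g\in R$ by $g(i)=f(i)^{-1}$ on $I\setminus X$ (legal by (1)) and $g(i)=0$ on $X$; then $fg=e_{I\setminus X}=1-e_X$, so $1-e_{\tv{f\in\mu}}\in fR$.

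Part (3) is the heart of the lemma. Properness is easy: if $\tv{f\in\mu}=\emptyset$ for some $f\in\fa$, then $f$ is a unit by (1), contradicting that $\fa$ is proper. For upward closure, given $X=\tv{f\in\mu}\in\d_\fa$ and $X\sub Z\sub I$, I would take $h=f(1-e_{Z\setminus X})\in\fa$: splitting on whether $i\in Z\setminus X$, one finds $\tv{h\in\mu}=(Z\setminus X)\cup X=Z$. The main obstacle is closure under finite intersections, since the natural candidates $fg$ and $f+g$ only produce the union $X\cup Y$ or a superset of $X\cap Y$. The fix is to take
\[
h = e_X\, g + (1-e_X),
\]
where $X=\tv{f\in\mu}$ and $Y=\tv{g\in\mu}$. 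Membership in $\fa$ uses both hypotheses: $e_X g\in gR\sub\fa$ and, crucially, $1-e_X\in fR\sub\fa$ by (2). Splitting on $i\in X$ versus $i\notin X$ gives $h(i)=g(i)$ on $X$ and $h(i)=1$ off $X$, so $\tv{h\in\mu}=X\cap Y$ \emph{exactly}.

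Part (4) is immediate, since enlarging $\fa$ only enlarges $\{\tv{f\in\mu}\colon f\in\fa\}$. For (5), given any $X\sub I$ the identity $e_X(1-e_X)=0\in\fp$ together with primality of $\fp$ forces $e_X\in\fp$ or $1-e_X=e_{I\setminus X}\in\fp$; a direct calculation gives $\tv{e_X\in\mu}=I\setminus X$ and $\tv{e_{I\setminus X}\in\mu}=X$ (since in each local ring $R_i$ one has $1\notin\mu_i$ and $0\in\mu_i$). Hence either $X$ or $I\setminus X$ lies in $\d_\fp$, which combined with (3) shows $\d_\fp$ is an ultrafilter.
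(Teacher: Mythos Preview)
Your proof is correct and follows essentially the same strategy as the paper: parts (1), (2), (4), (5) are handled identically, and for (3) both arguments hinge on (2) to get $1-e_X\in\fa$ and then combine idempotents. The only cosmetic difference is that the paper uniformly uses $1-e_Z$ as the witness for $Z\in\d_\fa$ (so for intersections it writes $1-e_Xe_Y=(1-e_X)+e_X(1-e_Y)$, and for upward closure it shows $1-e_Z=fg\in\fa$ directly), whereas you build bespoke witnesses $e_Xg+(1-e_X)$ and $f(1-e_{Z\setminus X})$; both choices work for the same reason.
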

\begin{proof}

(1), (2) and (4) are clear. For (3), firstly, note that by (1) $\emptyset\not\in\d_{\fa}$ for $\fa$ is
proper.
Now, let $X=\tv{f\in\mu}$ and $Y=\tv{g\in\mu}$ be in $\d_{\fa}$, so that $f,g\in \fa$.
Since 
$X\cap Y=\tv{1=e_{X}e_{Y}}=\tv{1-e_{X}e_{Y}\in\mu}, 
$ and $1-e_{X}e_{Y}=1-e_{X}+e_{X}(1-e_{Y})$,
we get  $1-e_{X}e_{Y}\in \fa$ since 
 both $1-e_{X}$ and $1-e_{Y}$ belong to
$\fa$ by (2).
Let $X\sub I$ and  $\tv{f\in\mu}\sub X$, for some  $f\in \fa$. Then,  let $g(i)=0$ if $i\in X$ and $g(i)=1/f(i)$ if $i\not\in X$ (and hence $f(i)$ is a unit in $R_i$)
so $1-e_X=fg$, and thus 
  $1-e_{X}\in \fa$,
and this implies $X=\tv{1-e_{X}\in\mu}\in\d_{\fa}$.

(5) If $\fp$ is a prime ideal of $R$ then, for every $X\sub I$, $(1-e_{X})e_{X}=0\in \fp$
implies either $1-e_{X}\in \fp$ or $1-e_{I \setminus X}=e_{X}\in \fp$.
Hence, either $X=\tv{1-e_{X}\in\mu}\in\d_{\fp}$ or $I\setminus X=\tv{e_{X}\in\mu}\in\d_{\fp}$.
\end{proof}
Note that $\d_{*}=Ker\,\pi_{\d}$ is a prime ideal of $R$ and 
$R/\d_{*}\cong\prod_{\d}R_i.$
On the other hand, $\d^{*}=\pi_{\d}^{-1}(\prod_{\d}\mu_i)$ is a maximal ideal of $R$.
Indeed, the residue maps $res_{i}\colon R_i\to R_i/\mu_i$
induce a map $res_{\d}:\prod_{\d}R_i\to\prod_{\d}R_i/\mu_i$. The map
$res_{\d}\circ\pi_{\d}$ is a homomorphism onto the ultraproduct $\prod_{\d}R_i/\mu_i$
which is a field, and $\d^{*}=\{f\in R:\tv{f\in\mu}\in\d\}=\pi_{\d}^{-1}(\prod_{\d}\mu_i)=Ker(res_{\d}\circ\pi_{\d})$.

\

\begin{thm}\label{d*}
$(1)$  Let $\d$ be an ultrafilter on $I$.

\hspace{1cm} $(1.\rm{a})$  For any ideal $\fa$ of $R$, $(\d_{\fa})_{*}\sub \fa\sub(\d_{\fa})^{*}$.

\hspace{1cm} $(1.\rm{b})$ $\d_{\d_{*}}=\d=\d_{\mathbb{D}^{*}}$. 

$(2)$  $\Min R=\{\d_{*}:\d\in\beta I\}$ and $\Max R=\{\d^{*}:\d\in\beta I\}$.

$(3)$  Let $\fm\in\Max R.$ Then, $R/\fm\cong\prod_{\d_{\fm}}R_i/\mu_i$.

\end{thm}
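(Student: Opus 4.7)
The plan is to derive the theorem from Lemma~\ref{monotonicity} together with the preamble observations that $\d_*=\Ker\pi_\d$ is prime (because $\prod_\d R_i$ is a domain in both the local domain and valuation ring settings) and that $\d^*=\Ker(res_\d\circ\pi_\d)$ is maximal, with quotient $\prod_\d R_i/\mu_i$. Part (1) is a direct computation from the definitions plus Lemma~\ref{monotonicity}(2); part (2) is deduced from (1) by standard minimality/maximality arguments, using that $\d_*,\d^*$ are primes of the right kind; and part (3) is immediate from (2) combined with the preamble description of $(\d_\fm)^*$.

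For (1.a), the inclusion $\fa\sub(\d_\fa)^*$ is a tautology: if $f\in\fa$ then $\tv{f\in\mu}\in\d_\fa$ by definition, hence $f\in(\d_\fa)^*$. For $(\d_\fa)_*\sub\fa$, pick $h$ with $\tv{h=0}\in\d_\fa$; by the explicit definition of $\d_\fa$ this means $\tv{h=0}=\tv{f\in\mu}$ for some $f\in\fa$. Setting $X:=\tv{f\in\mu}$, Lemma~\ref{monotonicity}(2) gives $1-e_X\in fR\sub\fa$, while $h$ vanishing on $X$ yields $e_X h=0$, so $h=(1-e_X)h\in\fa$. For (1.b), the inclusions $\d_{\d_*}\sub\d$ and $\d_{\d^*}\sub\d$ are immediate: a generator $\tv{f\in\mu}$ of $\d_{\d_*}$ contains $\tv{f=0}\in\d$ and so lies in $\d$, while every generator of $\d_{\d^*}$ lies in $\d$ by the very definition of $\d^*$. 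For the reverse direction, given $X\in\d$ the element $f:=1-e_X$ satisfies $\tv{f=0}=X=\tv{f\in\mu}$, so $f\in\d_*\cap\d^*$ and $X$ appears as a generator of both $\d_{\d_*}$ and $\d_{\d^*}$.

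For (2), I separate the two directions. The preamble gave $\{\d^*:\d\in\beta I\}\sub\Max R$; to show $\d_*$ is minimal, let $\fq\sub\d_*$ be prime: Lemma~\ref{monotonicity}(4) gives $\d_\fq\sub\d_{\d_*}=\d$, both sides are ultrafilters by Lemma~\ref{monotonicity}(5), so $\d_\fq=\d$, and then (1.a) yields $\d_*=(\d_\fq)_*\sub\fq$, forcing equality. Conversely, for $\fp\in\Min R$, $(\d_\fp)_*\sub\fp$ is prime, and minimality forces $\fp=(\d_\fp)_*$; for $\fm\in\Max R$, $\fm\sub(\d_\fm)^*$ is a proper ideal, so maximality forces $\fm=(\d_\fm)^*$. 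Part (3) is then immediate: by (2), $\fm=(\d_\fm)^*=\Ker(res_{\d_\fm}\circ\pi_{\d_\fm})$, and since this map surjects onto the field $\prod_{\d_\fm}R_i/\mu_i$, the first isomorphism theorem gives $R/\fm\cong\prod_{\d_\fm}R_i/\mu_i$. The only genuinely non-formal step is the argument $(\d_\fa)_*\sub\fa$, which rests on the idempotent identity $h=(1-e_X)h$ combined with Lemma~\ref{monotonicity}(2); everything else is systematic bookkeeping of the Galois-type correspondence between filters on $I$ and min/max primes of $R$.
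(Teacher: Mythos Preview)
Your proof is correct and follows essentially the same route as the paper's. The only cosmetic differences are that in (1.a) the paper observes directly that $h\in gR$ (in your notation) rather than factoring through the idempotent identity $h=(1-e_X)h$, and in (1.b) the paper proves just one inclusion $\d_{\d_*}\sub\d$ and then invokes that both sides are ultrafilters, whereas you exhibit both inclusions explicitly via $f=1-e_X$; parts (2) and (3) match the paper's argument almost verbatim.
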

\begin{proof}
(1.a) Let   $f\in(\d_{\fa})_{*}$. Then $\tv{f=0}\in\d_{\fa}$. Hence there
is a $g\in \fa$ such that $\tv{f=0}=\tv{g\in\mu}$. Thus, 
 $f\in gR$, hence $f\in \fa$. 

Now, let $f\in \fa$ then $\tv{f\in\mu}\in\d_{\fa}$ by definition,
so $f\in(\d_{\fa})^{*}$.

(1.b) Let $X\in\d_{\d_{*}}$. Then $X=\tv{f\in\mu}$ for some $f\in\d_{*}$,
so that $\tv{f=0}\in\d$. Since $\tv{f=0}\sub\tv{f\in\mu}$ we have
that $X=\tv{f\in\mu}\in\d$. Therefore, $\d_{\d_{*}}=\d$ since both
are ultrafilters. Hence, the second equality by Lemma \ref{monotonicity}(5)
since $\d_{*}\subseteq\d^{*}$.

(2) We show that all the $\d_{*}$ are minimal primes. Let $\fp\in \spec R$.
If $\fp\sub\d_{*}$ then $\d_{\fp}\sub\d_{\d_{*}}$, hence $\d_{\fp}=\d$
by (1.b). Now by (1.a) we get $\d_{*}\sub \fp$. Conversely, if $\fp\in\Min R$,
then, $(\d_{\fp})_{*}\sub \fp$ by (1.a), hence $\fp$ being minimal we get $(\d_{\fp})_{*}=\fp$.
The other equality is proved analogously.

(3)  By (2)  and (1.b) $\fm=\d^*$, where $\d=\d_{\fm}$ since $\fm$ is maximal. Hence $R/\fm\cong\prod_{\d}R_i/\prod_{\d}\mu_i\cong \prod_{\d} R_i/\mu_i$.
\end{proof}
\begin{cor}
\label{c1d*}

$(1)$  Every prime ideal of $R$ is contained in a unique maximal ideal,
\emph{i.e.}, the ring $R$ is a  $\mathrm{pm}$-ring.

$(2)$ Every maximal ideal of $R$ contains a unique minimal prime
ideal. 

\end{cor}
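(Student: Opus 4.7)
The plan is to exploit the bijections established in Theorem \ref{d*}(2) between $\beta I$ and both $\Min R$ and $\Max R$ (via $\d\mapsto\d_{*}$ and $\d\mapsto\d^{*}$ respectively, with common one-sided inverse $\fp\mapsto\d_{\fp}$). The key observation is that for a prime $\fp$ the filter $\d_{\fp}$ is already an ultrafilter by Lemma \ref{monotonicity}(5), so any prime containment $\fp\sub\fq$ forces $\d_{\fp}=\d_{\fq}$ by the monotonicity from Lemma \ref{monotonicity}(4). From this single constraint, both uniqueness statements will follow immediately.

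For (1), given any $\fp\in\spec R$, I would take $\fm:=(\d_{\fp})^{*}$, which is maximal by Theorem \ref{d*}(2) and contains $\fp$ by Theorem \ref{d*}(1.a). For uniqueness, suppose $\fm'$ is any maximal ideal with $\fp\sub\fm'$. Then $\d_{\fp}\sub\d_{\fm'}$ by Lemma \ref{monotonicity}(4), and since both are ultrafilters they must coincide. Now $\fm'=(\d_{\fm'})^{*}$, as seen by writing $\fm'=\d^{*}$ for some $\d$ (Theorem \ref{d*}(2)) and applying $\d_{\d^{*}}=\d$ from Theorem \ref{d*}(1.b). Hence $\fm'=(\d_{\fp})^{*}=\fm$.

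Part (2) is strictly dual. Given $\fm\in\Max R$, set $\fq:=(\d_{\fm})_{*}$, which is a minimal prime by Theorem \ref{d*}(2) and is contained in $\fm$ by Theorem \ref{d*}(1.a). For uniqueness, any $\fq'\in\Min R$ with $\fq'\sub\fm$ satisfies $\d_{\fq'}\sub\d_{\fm}$ by Lemma \ref{monotonicity}(4), forcing equality of these ultrafilters; combining the identity $\fq'=(\d_{\fq'})_{*}$ (obtained by writing $\fq'=\d_{*}$ and applying Theorem \ref{d*}(1.b)) with this equality gives $\fq'=(\d_{\fm})_{*}=\fq$. I do not anticipate any obstacle: the corollary is essentially a bookkeeping consequence of the fact that the three associated ultrafilters $\d_{\fp}$, $\d_{\fm}$, $\d_{\fq}$ (for $\fp$ prime, $\fm$ maximal, $\fq$ minimal) must all agree whenever the corresponding ideals are comparable, so chains of primes in $R$ are entirely controlled by a single ultrafilter on $I$.
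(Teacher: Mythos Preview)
Your proof is correct and follows essentially the same route as the paper's: both arguments exhibit $(\d_{\fp})^{*}$ as the containing maximal ideal via Theorem~\ref{d*}(1.a), then use Lemma~\ref{monotonicity}(4) together with the fact that $\d_{\fp}$ and $\d_{\fm'}$ are ultrafilters to force their equality, and finish by identifying $\fm'$ with $(\d_{\fp})^{*}$ (you via Theorem~\ref{d*}(2) and (1.b), the paper via (1.a) and maximality of $\fm'$). Part (2) is handled dually in both.
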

\begin{proof}
(1) Let $\fp$ be a prime ideal of $R$.
Then $\fp\sub(\d_{\fp})^{*}$, the latter being maximal, and for any maximal
$\fq$ containing $\fp$ we must have $\d_{\fp}\sub\d_{\fq}$, hence being
ultrafilters they coincide. Therefore $\fq\sub(\d_{\fp})^{*}$ by Theorem
\ref{d*}\,1(a), and so $\fq=(\d_{\fp})^{*}$.

(2) Similarly, considering the minimal prime ideal associated to the
relevant ultrafilter.
\end{proof}

\

b) We now consider products $R=\prod_{i\in I}V_i$, where the $V_i$ are valuation rings. A valuation ring is a domain whose set of ideals is linearly ordered by inclusion. Note that this property is not obviously a  first order property. However, a more common equivalent definition is the one  that asks only that the principal ideals are linearly ordered by inclusion.   The latter  which is first order is the one we are going to use, and so the class of valuation rings is closed under ultraproducts.  We make fundamental use of that equivalence in what follows. See  \cite{Matsumura} for such basic properties of valuation rings.  Valuation rings are local domains so  the preceding analysis applies to products of valuation rings, but now we can get results not true in the local rings case  via the natural valuation of the valuation rings.  If $V$ is a valuation ring we consider the chain of principal ideals  $$\mathcal{C}:\quad(1)<\cdots< (a)<\cdots <(0)$$  for any $a$ nonzero and nonunit in $V$, where $(a)< (b)$ if $a$ divides $b$  and $b$ does not divide $a$. This linear order is interpretable uniformly in all valuation rings and commutes with ultraproducts.   We define $\infty$ as $(0)$. There is a totally defined operation   $\oplus$ on the linear order $\mathcal{C}$ given by $(a)\oplus(b)=(ab)$.  This is a commutative operation and  $(a)\oplus\infty=\infty,$ for all $a\in V$. Let $\Gamma= \mathcal{C}\setminus\{\infty\}$.  It is easy to see that $\Gamma$ is an ordered abelian cancellative  semigroup having $(1)$ as identity element. We refer to it as the value semigroup of $V$. $\Gamma$  is the nonnegative part of an essentially unique ordered Abelian  group $\widetilde{\Gamma}$.  Now we define a map $v:V\to\Gamma \cup \{\infty\}$, sending $a$ to $(a)$, which it is easily proved to be a valuation on $V$, and  it extends naturally to the fraction field of $V$ taking values in $\widetilde{\Gamma}\cup\{\infty\}$.  This definition is easily   shown to be equivalent  the more standard one where   ${\Gamma}$ is $V^*/\mathrm{U}$, and   $\mathrm{U}$ is the multiplicative group of units of $V$, and the order is given by $x \mathrm{U}\leq y\mathrm{U}$ if $x$ divides $y$ in $V$; and formally $\infty$ is added.

 Note that by the definitions we have given, valuation and value semigroup commute with ultraproducts, that is,   for any $\d\in \be I$, the value semigroup $\Gamma(\prod_\d V_i)$ is (isomorphic to)  $\prod_\d\Gamma(V_i)$, and  the valuation map $$v:\textstyle\prod_\d V_i \to \textstyle\prod_\d\Gamma(V_i) \cup\{\infty\}$$ is the ultraproduct of the valuation maps $v:V_i\to \Gamma(V_i) \cup\{\infty\}$
 
 Besides the results for local rings, in the case of valuation rings we further have the following.
 
 \begin{thm}
\label{idealslo} Let $\fp\in\spec R$. The
set of ideals of $R$ containing $\fp$ is linearly ordered by
inclusion. 
\end{thm}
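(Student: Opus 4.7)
The plan is to argue by contradiction, exploiting the valuation-ring axiom coordinatewise and the primality of $\fp$ through a single well-chosen idempotent of $R$. Suppose for contradiction that there exist ideals $\fa, \fb$ of $R$ with $\fp \sub \fa$ and $\fp \sub \fb$, yet $\fa \not\sub \fb$ and $\fb \not\sub \fa$; pick witnesses $f \in \fa \setminus \fb$ and $g \in \fb \setminus \fa$. Since each $V_i$ is a valuation ring, for every $i \in I$ the principal ideals of $V_i$ generated by $f(i)$ and $g(i)$ are comparable, so the set
$$X = \{i \in I : f(i) \text{ divides } g(i) \text{ in } V_i\}$$
has the property that $f(i) \mid g(i)$ on $X$ and $g(i) \mid f(i)$ on $I \setminus X$.

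The key step is then to use this partition, together with the idempotent $e_X \in R$, to split $f$ and $g$ into pieces controlled by $(f)$, $(g)$, and $\fp$. Choose $h \in R$ by setting $h(i)$ to be a witness to $g(i) = f(i) h(i)$ for $i \in X$ and $h(i) = 0$ otherwise; analogously choose $k \in R$ with $f(i) = g(i) k(i)$ for $i \not\in X$ and $k(i) = 0$ otherwise. A direct pointwise computation then gives
$$e_X g = f \cdot (e_X h) \quad \text{and} \quad (1 - e_X) f = g \cdot ((1 - e_X) k),$$
so that $e_X g \in (f) \sub \fa$ and $(1 - e_X) f \in (g) \sub \fb$.

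To conclude, note that $e_X (1 - e_X) = 0 \in \fp$, so primality of $\fp$ forces either $1 - e_X \in \fp$ or $e_X \in \fp$. In the first case $(1 - e_X) g \in \fp \sub \fa$, which combined with $e_X g \in \fa$ yields $g = e_X g + (1 - e_X) g \in \fa$, contradicting $g \not\in \fa$; the second case gives $f \in \fb$ by the symmetric argument. The main obstacle I foresee is isolating the right splitting---namely that $X$ must come from coordinatewise divisibility in the $V_i$, rather than from the ultrafilter $\d_\fp$ attached to $\fp$---because once the valuation axiom supplies $h$ and $k$ and the idempotent $e_X$ globalises these local divisibilities, the primality of $\fp$ finishes the argument with no further work.
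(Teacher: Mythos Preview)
Your proof is correct, but it follows a genuinely different route from the paper's. The paper argues structurally: it invokes Theorem~\ref{d*}(2) to find a minimal prime $\d_*\subseteq\fp$, passes to the quotient $R/\d_*\cong\prod_{\d}V_i$, observes that this ultraproduct is again a valuation ring (the axiom being first-order), and then quotes the fact that \emph{all} ideals of a valuation domain are linearly ordered. Your argument bypasses the ultraproduct entirely and works directly in $R$: you use the coordinatewise valuation-ring axiom to build the set $X$ and the witnesses $h,k$, globalise them via the idempotent $e_X$, and then let the primality of $\fp$ pick a side. In effect, your idempotent splitting plays the role that the ultrafilter $\d_\fp$ plays in the paper's approach, but you never need to name $\d_\fp$ or appeal to Theorem~\ref{d*}.

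What each approach buys: yours is self-contained and more elementary, requiring nothing beyond the definition of a valuation ring and primality of $\fp$; the paper's is shorter on the page and makes explicit the conceptual point that the chain above $\fp$ lives inside a single valuation domain $\prod_\d V_i$, which is used repeatedly in the subsequent analysis of quotients and localizations.
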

\begin{proof}
By Theorem \ref{d*}(2) $\fp\supseteq\d_{*}$ for some ultrafilter $\d$ of
$I$. Then, ideals of $R$ containing $\fp$ correspond to ideals of $R/\d_*\cong \prod_\d V_i$ containing the image of $\fp$ in $R/\d_*$. Since  $\prod_\d V_i$ is a valuation domain its set of ideals is linearly ordered.
\end{proof}
In the case of $\zhat$ if $\d$ is a nonprincipal ultrafilter then  the ultraproduct $\prod_\d\Z_p$ is a  model of the axioms for Henselian rings  with value semigroup a  model of Presburger  and the residue field is a model of the theory of finite prime fields.

\section{The spectrum of $\prod_{i\in I} V_i$}
Let  $R=\prod_{i\in I}V_i$ with the $V_i$ local rings.
$\spec R$  is reduced, by definition, since $ R$ is reduced.
$\spec  R$ is not connected if $|I|>1$ since there are idempotents in $ R$
which are not 0 or 1 (if $X\sub I$, $X\not=\emptyset, I$ then $\spec\, R=V_{R}(e_{X})\cup V_{R}(1-e_{X})$).
By Corollary\,\ref{c1d*} every prime ideal of $ R$ is contained
in a unique maximal ideal, hence by \cite[Theorem 1.2]{MO71pmrings}
$\spec R$ is a normal space, $\Max R$ is Hausdorff and
the map associating to each $\fp\in \spec R$ the maximal ideal
$(\d_{\fp})^{*}$ is the unique retraction of $\spec R$ onto $\Max R$.

The basic closed sets in the Zariski topology of $\spec\,R$ will be denoted by $V_{R}(f):=\{\fp\in \spec\,R:f\in \fp\}$,
for $f\in R$, and the basic closed sets of the Stone topology on
$\be I$ will be denoted by $\langle X\rangle:=\{\d\in\be I:X\in\d\}$,
for $X\sub I$.

\begin{thm}
\label{cspec} The maps $\alpha_{*}:\beta I\to\Min R:\d\mapsto\d_{*}$
and $\alpha^{*}:\beta I\to\Max R:\d\mapsto\d^{*}$ are homeomorphisms.
In particular, $\Min R$, $\Max R$ are compact Hausdorff. 
\end{thm}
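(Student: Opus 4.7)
The plan is to deduce the bijective part from Theorem~\ref{d*} and then verify continuity and closedness separately; since $\be I$ is already compact Hausdorff, this will also yield the ``in particular'' statement for free. Bijectivity is immediate: Theorem~\ref{d*}(2) gives surjectivity, and Theorem~\ref{d*}(1.b) shows that $\fp\mapsto\d_\fp$ is a two-sided set-theoretic inverse for both $\alpha_*$ and $\alpha^*$. What remains is purely topological.

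For continuity I would pull back subbasic closed sets. Unwinding the definitions of $\d_*$ and $\d^*$, for any $f\in R$ one has
\[
\alpha_*^{-1}\bigl(V_R(f)\cap\Min R\bigr)=\{\d\in\be I:\tv{f=0}\in\d\}=\langle\tv{f=0}\rangle
\]
and
\[
\alpha^{*-1}\bigl(V_R(f)\cap\Max R\bigr)=\{\d\in\be I:\tv{f\in\mu}\in\d\}=\langle\tv{f\in\mu}\rangle,
\]
which are basic closed sets of $\be I$. Thus both maps are continuous.

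For closedness---the only step requiring a small observation rather than mechanical unwinding---I would check that every basic closed set $\langle X\rangle\sub\be I$ is realised on both sides simultaneously by the \emph{same} witness $f=e_{I\setminus X}\in R$: indeed $f(i)=0$ exactly when $i\in X$, and $f(i)=1\not\in\mu_i$ exactly when $i\not\in X$, so $\tv{f=0}=X=\tv{f\in\mu}$. Combined with the formulas of the previous paragraph and bijectivity, this gives
\[
\alpha_*(\langle X\rangle)=V_R(e_{I\setminus X})\cap\Min R,\qquad \alpha^*(\langle X\rangle)=V_R(e_{I\setminus X})\cap\Max R,
\]
both closed. Since every closed subset of $\be I$ is an intersection of basic closed sets and the maps are bijective, the forward images of arbitrary closed sets are closed. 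Hence $\alpha_*$ and $\alpha^*$ are homeomorphisms, and the compact Hausdorff property of $\Min R$ and $\Max R$ then transfers directly from $\be I$.
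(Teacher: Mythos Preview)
Your proof is correct and follows essentially the same route as the paper's: the paper also establishes bijectivity (arguing injectivity directly rather than via Theorem~\ref{d*}(1.b), but this is a cosmetic difference), computes the same preimages $\langle\tv{f=0}\rangle$ and $\langle\tv{f\in\mu}\rangle$ for continuity, and shows $\alpha_*(\langle X\rangle)=V_R(1-e_X)\cap\Min R$ for closedness---your witness $e_{I\setminus X}$ is literally $1-e_X$. Your unified treatment of the closedness step for both maps via $\tv{e_{I\setminus X}=0}=X=\tv{e_{I\setminus X}\in\mu}$ is a nice touch the paper leaves implicit in its ``the proof that $\alpha^*$ is a homeomorphism is similar''.
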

\proof
We first check that $\al_{*}$ is injective. Let $\d_{1}\not=\d_{2}$.
Take $X\subseteq I$, $X\in\d_{1}\setminus\d_{2}$ then $1-e_{X}\in(\d_{1})_{*}$
and $e_{X}\in(\d_{2})_{*}$, hence $1-e_{X}\not\in(\d_{2})_{*}$.
By Theorem \ref{d*}(2) the map $\alpha_{*}$ is also onto. To check
that $\alpha_{*}$ is continuos let $f\in R$ and $V:=V_{R}(f)\cap\Min R$
a basic closed subset of $\Min R$. Then $(\al_{*})^{-1}(V)=\left\{ \d\in\beta I:\d_{*}\in V\right\} =\left\{ \d\in\beta I:f\in\d_{*}\right\} =\left\{ \d\in\beta I:\tv{f=0}\in\d\right\} =\langle\tv{f=0}\rangle$
which is closed. Now we prove that $(\alpha_{*})^{-1}$ is continuous.
Let $\left\langle X\right\rangle $ be a basic closed for some $X\sub I$.
Then
$\al_{*}(\left\langle X\right\rangle )=\left\{ \d_{*}:X\in\d\right\} =\left\{ \d_{*}:\tv{1-e_{X}=0}\in\d\right\} =V_{R}(1-e_{X})\cap\Min R$,
last equality by Theorem \ref{d*}(2). 

The proof that $\alpha^{*}$ is homeomorphism is similar.
\endproof

\noindent
\emph{Picture of $\spec R$.} It is a union of maximal chains (under $\sub$), a typical chain being 
$$\left. \begin{array}{cc}
\text{maximal} & (\d_{\fp})^*\\
& \uparrow\\
&\fp\\
&\uparrow\\
\text{minimal} & (\d_{\fp})_*
\end{array}\right\}\text{linear order}
$$
The chains are indexed by $\be I$.
\begin{lem} For any $\fp\in \spec R$, $(\d_{\fp})_*\not=(\d_{\fp})^*$, provided that none of the $V_i$ is a field.
\end{lem}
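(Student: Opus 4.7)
The plan is to observe that the asserted inequality only really depends on the ultrafilter $\d := \d_\fp$ itself, and to exhibit an explicit element of $R$ lying in $\d^*$ but not in $\d_*$. By Theorem \ref{d*}, $R/\d_* \cong \prod_\d V_i$ and, via the surjection $res_\d\circ\pi_\d$, $R/\d^* \cong \prod_\d V_i/\mu_i$. Hence $\d_* = \d^*$ is equivalent to the ultraproduct $\prod_\d V_i$ being a field, i.e.\ to $\prod_\d \mu_i = 0$.

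The obvious idea is then to build, by the axiom of choice, an element $a \in R$ whose image in $\prod_\d V_i$ is a nonzero element of $\prod_\d\mu_i$. Because each $V_i$ is a valuation ring that is not a field, its maximal ideal $\mu_i$ is nonzero, so we may pick some $a_i \in \mu_i \setminus \{0\}$. Setting $a = (a_i)_{i\in I} \in R$, we have $\tv{a \in \mu} = I \in \d$ (so $a \in \d^*$) while $\tv{a = 0} = \emptyset \notin \d$ (since $\d$ is a proper filter, using Lemma \ref{monotonicity}(3) together with the fact that $\d = \d_\fp$ is an ultrafilter by Lemma \ref{monotonicity}(5)). Thus $a \in \d^* \setminus \d_*$ and the inclusion $\d_* \subsetneq \d^*$ is strict.

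Since $\fp$ did not enter the argument beyond defining $\d$, the lemma follows from Theorem \ref{d*}(1.a), which sandwiches $\fp$ between $(\d_\fp)_*$ and $(\d_\fp)^*$. There is no serious obstacle here: the only points to be careful about are (i) that the hypothesis \emph{none of the $V_i$ is a field} really does give $\mu_i \neq (0)$ for every $i$ under the paper's definition of valuation ring as a domain with linearly ordered ideals (for then $\mu_i = 0$ would force $V_i$ to be a field), and (ii) that the coordinate-wise choice of $a_i$ requires the axiom of choice, which is in force throughout the paper.
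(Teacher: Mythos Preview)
Your proof is correct and follows essentially the same line as the paper's: both amount to showing that for $\d=\d_\fp$ the quotient $R/\d_*\cong\prod_\d V_i$ is not a field. The paper phrases this via the value semigroup ($\Gamma_i\neq\{0\}$ for all $i$ implies $\prod_\d\Gamma_i\neq\{0\}$, implicitly by \L o\'s), while you give the explicit witness $a=(a_i)$ with $a_i\in\mu_i\setminus\{0\}$; your version is marginally more elementary and in fact only uses that the $V_i$ are local domains. The closing remark about Theorem~\ref{d*}(1.a) is superfluous, since the lemma's conclusion concerns only $(\d_\fp)_*$ and $(\d_\fp)^*$, not $\fp$ itself.
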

\proof Just observe that $R/(\d_{\fp})_*\cong \prod_{\d_{\fp}}V_i$, a valuation ring and $R/(\d_{\fp})_*\cong \prod_{\d_{\fp}}k_i$. By our {assumption} that $V_i$ is not a field (\emph{i.e.}, $\Gamma_i\not=\{0\}$), $(\d_{\fp})_*$ is not maximal.
\endproof

In the rest of this section {we assume the $V_i$ are valuation  rings}. The prime ideals $\fp$ of $R$ containing the minimal prime $\d_*$ ($\d\in\be I$) correspond exactly to the primes $\cp $ of  $R/\d_*$, \emph{i.e.} to the primes $\cp $ of the ultraproduct $\prod_{\d}V_i$, and the latter is a valuation ring with  valuation $v$  to the ultraproduct $\Gamma:=\prod_\d\Gamma_i$, where $\Gamma_i=\Gamma(V_i)$. The $\cp $ correspond exactly to convex subsemigroups $\Delta$ of $\Gamma$. To a prime ideal $\cp $ one associates  the convex subsemigroup $$\Delta_\cp=\{ \gamma\in \Gamma\colon \forall f_\d\in\cp , \gamma<v(f_\d)\},$$ and to a convex subsemigroup $\Delta$ one associates the prime ideal  $$\cp_\Delta = \{f_\d\in\textstyle\prod_{\d}V_i\colon v(f_\d)>\Delta\}.$$  The correspondence $$\Delta\longmapsto\cp $$is order reversing, with $\Gamma\mapsto \{0\}$ and $\{0\}\mapsto\mu$ the maximal ideal of $\prod_{\d}V_i$, and clearly $\Delta_{\mathcal{P}_\Delta}=\Delta$ and $\mathcal{P}_{\Delta_\mathcal{P}}=\mathcal{P}.$

\
 
Now we address the question of the length of the chains of prime ideals (each chain corresponding to an ultrafilter on $I$). Firstly, we consider the case of principal ultrafilters.
 
\begin{prop}
\label{princ1} 

$(1)$  If $\fp\in\spec R$ is  a principal ideal of $R$ then  $\d_{\fp}$ is a principal ultrafilter generated by $\left\{ i\right\} $, for some $i\in I$.

$(2)$ If $\d\in\be I$ is the principal ultrafilter generated by $\{i\}$
then $$\d_{*}=(1-e_{\{i\}})\quad\text{and}\quad\d^*=\{f\in R\st f(i)\in \mu_i\}.$$

$(3)$ If the chain for $\fp(\in\spec R)$ has length $2$ then $\d_\fp$ is a principal ultrafilter.
\end{prop}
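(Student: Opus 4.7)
Plan. I would handle the three parts in sequence, exploiting the coordinate-wise structure of $R=\prod_{i\in I}V_i$. For (1) the key identity is $(g)=\prod_i g(i)V_i$, which holds because the coefficient $h(i)$ of any multiple $gh$ may be freely chosen in each coordinate; hence $R/(g)\cong\prod_i V_i/(g(i))$. A product of rings is a domain if and only if exactly one factor is nonzero and is itself a domain, and applying this to the hypothesis that $(g)$ is prime produces a unique index $i_0\in I$ with $g(i_0)\in\mu_{i_0}$ (possibly zero) and $g(i)$ a unit of $V_i$ for every $i\ne i_0$. Therefore $\tv{g\in\mu}=\{i_0\}\in\d_\fp$, forcing $\d_\fp$ to be the principal ultrafilter at $i_0$. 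Part (2) is an unwinding of definitions: for $\d$ principal at $i$, the membership $X\in\d$ is equivalent to $i\in X$, giving $\d_*=\{f\in R:f(i)=0\}$ and $\d^*=\{f\in R:f(i)\in\mu_i\}$; the identification $\{f:f(i)=0\}=(1-e_{\{i\}})R$ is a one-line coordinate check (multiplication by $1-e_{\{i\}}$ zeros out the $i$-th coordinate and leaves the rest unchanged).

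For (3) I would argue the contrapositive: assume $\d:=\d_\fp$ is nonprincipal and exhibit a prime of $R$ strictly between $\d_*$ and $\d^*$, contradicting chain length $2$. Since $R/\d_*\cong\prod_\d V_i$ is a valuation ring whose primes correspond order-reversingly to the convex subsemigroups of $\Gamma:=\prod_\d\Gamma_i$ via the dictionary $\Delta\mapsto\mathcal{P}_\Delta$ set up just before this proposition, it suffices to produce a proper nonzero convex subsemigroup of $\Gamma$. Pick $f\in R$ with $f(i)\in\mu_i\setminus\{0\}$ for every $i$, possible since no $V_i$ is a field, and set $\gamma:=v(f_\d)>0$. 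The archimedean class
\[
\Delta:=\{\beta\in\Gamma:\beta\le n\gamma \text{ for some } n\in\N\}
\]
is a convex subsemigroup of $\Gamma$ containing $\gamma$, hence nonzero.

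Properness is where nonprincipality is essential: choose a function $k:I\to\N$ with $\{i:k(i)>n\}\in\d$ for every $n\in\N$, which is available for any nonprincipal ultrafilter on the main countable index set $I=\P$ (and more generally for any countably incomplete $\d$, since this says exactly that $\prod_\d\N$ is nonstandard). Setting $g(i):=f(i)^{k(i)}$ one computes $v(g_\d)=[k(i)v_i(f(i))]_\d$, and using $v_i(f(i))>0$ in the ordered semigroup $\Gamma_i$,
\[
\{i:k(i)v_i(f(i))>n v_i(f(i))\}=\{i:k(i)>n\}\in\d
\]
for every $n\in\N$. Hence $v(g_\d)>n\gamma$ for all $n$, so $v(g_\d)\notin\Delta$ and $\Delta$ is proper. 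The main obstacle is precisely this step: one must exploit nonprincipality of $\d$ to manufacture an element of $\Gamma$ infinitely larger than $\gamma$, and it is this fabrication that forces a third prime in the chain through $\fp$ and thereby excludes length exactly $2$.
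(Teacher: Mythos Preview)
Your proof is correct and follows essentially the same strategy as the paper in all three parts: the coordinate decomposition $R/(g)\cong\prod_i V_i/(g(i))$ for (1), the direct unwinding of definitions for (2), and the contrapositive via a proper nontrivial convex subsemigroup of $\prod_\d\Gamma_i$ for (3). The only substantive difference is that in (3) the paper simply writes ``Let $\Delta$ be a proper nontrivial initial segment (closed under addition)'' without justifying existence, whereas you actually construct such a $\Delta$ as the archimedean class of a positive $\gamma$ and verify properness by producing an element above every $n\gamma$ --- correctly isolating countable incompleteness of $\d$ (automatic for the intended case $I=\P$) as the ingredient that makes this work.
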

\begin{proof}
(1) If $\fp=(f)=\prod_{i\in I}(f(i))$ is a prime ideal of $R$, then
$\prod_{i\in I}V_i/(f)\cong \prod_{i\in I}(V_i/(f(i)))$ is a domain, so  must be isomorphic to $V_{i}/(f({i}))$ for some $i\in I$. Since $f(j)$  must be a unit in $V_j$, for each $j\in I$, $ j\not=i$ we have 
 that $\{i\}=\tv{f\in\mu}\in\{\tv{g\in\mu}:g\in(f)\}=\d_{(f)}$.

(2) Let $\d\in\be\P$ be a principal ultrafilter, generated by $\{i\}$, say, 
then $\d_{*}=\{f\in R:\tv{f=0}\ni i\}=\{f\in R:f(i)=0\}=(1-e_{\{i\}})$ and
$\d^{*}=\{f\in R:\tv{f\in\mu}\ni i\}=\{f\in R\st f(i)\in \mu_i\}$.

(3)  Suppose $\d_\fp$ is nonprincipal then the value semigroup of the  valuation ring  $\prod_{\d_\fp}V_i$ is the ultraproduct $\prod_{\d_\fp}\Gamma_i$. Let $\Delta$ be a proper  nontrivial initial segment (closed under addition). Hence the chain of length 3  of subsemigroups $\{0\}\subset\Delta\subset \prod_{\d_p}\Gamma_i$ 
 gives rise  to a chain of prime ideals $\mu\supset \fq\supset \{0\}$ in the ultraproduct $\prod_{\d_\fp}V_i\cong R/(\d_\fp)_*$ which in turn gives a prime ideal $\pi_{\d_\fp}^{-1}(\fq)\in\spec R$ strictly between $(\d_\fp)_*$ and $(\d_\fp)^*$.
\end{proof}
Note that  neither the converse of (1) nor of (3) is  true in general. It suffices to get one of the  valuation rings, $V_j$ say, with  maximal ideal, $\mu_j$, not principal,  with Krull dimension greater than 1, and $\d$ the principal ultrafilter generated by $\{j\}$. Thus,  if $\fp=\prod_{i\in I, j\not=i}V_i\times\mu_j$ then $\fp$ is nonprincipal   but $\d_\fp=\d$ is  generated by $\{j\}$, and the chain for $\fp$ has length $>2$.  In Theorem\,\ref{quotientsch}   we  give various examples of  rings as the mentioned $V_j$.

When $R=\zhat$ (or all the $V_i$ are discrete valuation rings) we can say more about the principal ultrafilter case:
\begin{cor}
\label{princ} 

$(1)$ Let $(f)\in\spec\zhat$. Then there is a $p\in\P$
such that either $\left(f\right)=\left(1-e_{p}\right)$ or $(f)=(1-(1-p)e_{p})$.
 Moreover, $\d_{(f)}$ is a principal ultrafilter (generated
by $\left\{ p\right\} $). 

$(2)$ If $\d\in\be\P$ is the principal ultrafilter generated by $\{p\}$
then $\d_{*}=(1-e_{p})$ and $\d^{*}=(1-(1-p)e_{p})$. Moreover, $\d_{*}$
and $\d^{*}$ are the only two prime ideals associated to $\d$ (i.e., the $\fp\in\spec\zhat$ with $\d_\fp=\d$). 

$(3)$  For any $\fp\in \spec\,\zhat,$  the chain for $\fp$ has length $2$ iff $\fp$ is principal iff $\d_{\fp}$
is principal.

$(4)$  For any $\fp\in \spec\,\zhat,$ $\d_{\fp}$ is nonprincipal if and only
if $\fp\cap\Z=\left\{ 0\right\} $ and $\fp\not=(1-e_{p})$, for any $p\in\P$.  If $\d$ is generated by $\{p\}$  the image of $p\in\Z$ in $\zhat$  generates $\d^*$. In particular,
if $\fp\in\Min\,\zhat$ then $\fp\cap\Z=\left\{ 0\right\} $.

\end{cor}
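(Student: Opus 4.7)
The plan is to deduce each of the four assertions from Proposition \ref{princ1} and Theorem \ref{d*}, specialized to $V_p=\pea$. The crucial simplification is that each $\pea$ is a DVR, so $\spec\pea=\{(0),(p)\}$; this collapses the ambiguities visible in the general valuation-ring setting and makes every chain above a principal ultrafilter as short as possible.

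For (1), I would start from Proposition \ref{princ1}(1): any principal prime $(f)$ of $\zhat$ forces $\d_{(f)}$ to be principal (generated by some singleton $\{p\}$) and $\zhat/(f)\cong\pea/(f(p))$ to be a domain. The only proper ideals of $\pea$ with domain quotient are $(0)$ and $(p)$, so $(f(p))\in\{(0),(p)\}$; combined with $f(q)$ being a unit for every $q\neq p$ (since $\tv{f\in\mu}=\{p\}$), $f$ must be a unit multiple of either $1-e_p$ or $1-(1-p)e_p$. For (2), Proposition \ref{princ1}(2) already gives $\d_{*}=(1-e_p)$, and I would verify $\d^{*}=(1-(1-p)e_p)$ by explicit division: any $f$ with $f(p)\in p\pea$ equals $(1-(1-p)e_p)\cdot h$ where $h(p)=f(p)/p\in\pea$ and $h(q)=f(q)$ for $q\neq p$. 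Uniqueness of these two primes above $\d$ follows from Theorem \ref{d*} together with $\zhat/\d_{*}\cong\prod_{\d}\pea=\pea$ (when $\d$ is principal) having only two prime ideals.

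Part (3) is then immediate: the implication ``chain length $2\Rightarrow\d_{\fp}$ principal'' is Proposition \ref{princ1}(3); conversely, if $\d_{\fp}$ is principal then by (2) the only primes with that $\d_{\fp}$ are $\d_{*}\neq\d^{*}$, so the chain has length exactly $2$. The equivalence with principality of $\fp$ combines (1) and (2). For (4) I would argue both directions from (2): if $\d_{\fp}$ is principal generated by $\{p\}$ then $\fp\in\{(1-e_p),\d^{*}\}$, and the diagonal image of $p\in\Z$ lies in $\d^{*}$ (since $p\in\mu_q$ precisely when $q=p$), giving either $\fp=(1-e_p)$ or $p\in\fp\cap\Z\setminus\{0\}$. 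Conversely, if $n\in\fp\cap\Z\setminus\{0\}$ then primeness of $\fp$ places some prime divisor $p$ of $n$ in $\fp$, and $\tv{p\in\mu}=\{p\}\in\d_{\fp}$ forces $\d_{\fp}$ principal. The side claim that the diagonal image of $p$ generates $\d^{*}$ uses the same division argument as in (2). The final ``in particular'' splits into cases: if $\d_{\fp}$ is nonprincipal the ``$\Rightarrow$'' clause applies; if $\d_{\fp}$ is principal and $\fp\in\Min\zhat$ then $\fp=\d_{*}=(1-e_p)$, and $(1-e_p)\cap\Z=\{0\}$ since any $n\in\Z$ in this ideal vanishes at $p$ and is therefore zero.

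The only genuine calculation is the explicit identification $\d^{*}=(1-(1-p)e_p)$ in part (2); everything else is a case analysis guided by Proposition \ref{princ1} and the two-point structure of $\spec\pea$. The main obstacle I foresee is bookkeeping rather than depth: one must track carefully how the principal ultrafilter generated by $\{p\}$ labels its minimal and maximal primes, and how the diagonal embedding $\Z\hookrightarrow\zhat$ sends each rational prime into the corresponding maximal ideal on the nose, so that the condition $\fp\cap\Z=\{0\}$ in (4) translates cleanly into a condition on $\d_{\fp}$.
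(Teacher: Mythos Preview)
Your proposal is correct and follows essentially the same route as the paper: reduce to Proposition~\ref{princ1} and Theorem~\ref{d*}, then use that $\spec\pea=\{(0),(p)\}$ to pin down the two principal primes and to see that the chain over a principal $\d$ has exactly two members. Your treatment is in places slightly more explicit than the paper's (the division argument for $\d^{*}=(1-(1-p)e_p)$ and the case split in the ``in particular'' clause of (4)), but the underlying argument is the same.
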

\begin{proof}
(1) If $(f)=\prod_{q\in\P}(f(q))$ is a prime ideal of $\zhat$, then
$\prod_{q\in\P}(\Z_{q}/(f(q)))$ is a domain, so there are no different
primes $q_{1}$ and $q_{2}$ with $\Z_{q_{i}}/(f(q_{i}))\not=0$.
Hence, there is a prime $p$ such that $f(q)\in U(\mathbb{Z}_{q})$
for all $q\not=p$ and $(f(p))$ prime in $\pea$, so $f(p)\in\mu_{p}$.
Thus, we have that $$\{p\}=\tv{f\in\mu}\in\{\tv{g\in\mu}:g\in(f)\}=\d_{(f)}.$$

(2) Let $\d\in\be\P$ be the principal ultrafilter generated by $\{p\}$,
then $$\d_{*}=\{f\in\zhat:\tv{f=0}\ni p\}=\{f\in\zhat:f(p)=0\}=(1-e_p)$$ and
$$\d^{*}=\{f\in\zhat:\tv{f\in\mu}\ni p\}=\{f\in\zhat:f(p)\in p\pea\}=(1-(1-p)e_{p}).$$
Take now $\fp\in \spec\,\zhat$ associated to $\d$ and suppose that
$\d_{*}\subset \fp\subseteq\d^{*}$. Since $\prod_{\d}\pea\cong\pea$ via
the map $f_{\d}\mapsto f(p)$, the composition $\pi_{\d}$ with the
latter is the map $\pi_{p}:\zhat\to\pea:f\mapsto f(p)$, and so $\d_{*}=\ker\pi_{p}$.
Hence, since $\d_{*}\subset \fp$, $\pi_{p}(\fp)$ is a nonzero prime
ideal in $\pea$, so $\pi_{p}(\fp)=p\pea$, hence $\fp=\left\{ f\in\zhat:f(p)\in p\pea\right\} =\d^{*}$. 

(3) From (1) and (2) and the proof of (3) in the previous proposition.

(4) If $\d_{\fp}$ is principal then $\fp=\d_{*}$ or $\fp=\d^{*}$, where
$\d=\d_{\fp}$, and  since $p$ is invertible in every $\Z_q$ for any prime $q\not=p$ we have that $\d^*=(1-(1-p)e_{p})=(pe_\P)$, then $p\in\d^{*}\cap\Z$.
On the other hand, if there is $n\in \fp\cap\Z$ then there is $p\in\P$
belonging to $\fp$, and hence $1-(1-p)e_{p}\in \fp$, so $\d_{\fp}$ is
principal.
\end{proof}
But what happens if $\d_\fp$ is nonprincipal? Here set theory intervenes, even when $I$ has cardinality $\aleph_0$
(as it does when we specialize later to $\zhat$).

We make some remarks concerning $\zhat$ at this point. $I=\P$ which is countable, and the common value semigroup is $\N(=\omega)$. Thus if $\d$ is an element of $\be\P$, $\d$ {nonprincipal}, the ultraproduct $\prod_\d\pea$ has cardinal $2^{\aleph_0}$ and is $\aleph_1$-saturated \cite{changkeisler}. The ordered semigroup $\prod_\d\omega$ has cardinal $2^{\aleph_0}$, is $\aleph_1$-saturated, and since its theory is independent of $\d$ so is its isomorphism type, under the continuum hypothesis  assumption. In particular the order type of the set of convex subsemigroups of $\prod_\d\omega$ is independent of $\d$. This, together with the correspondence between the prime ideals and convex subsemigroups above mentioned, allows as to prove:
\begin{lem} Suppose $\mathcal{CH}$. Then for $R=\zhat$ the length of the chain is independent  of $\d$ for nonprincipal $\d$.
\end{lem}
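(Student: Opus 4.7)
The plan is to exploit the correspondence set up just before the statement between prime ideals $\fp$ of $\zhat$ with $\d_{\fp}=\d$ and convex subsemigroups of the value semigroup $\Ga_\d := \prod_\d \omega$ of the valuation ring $\prod_\d\pea\cong \zhat/\d_*$. Under this order-reversing bijection the chain of primes of $\zhat$ associated to $\d$ is identified with a maximal chain of convex subsemigroups of $\Ga_\d$. Hence it suffices to show that, under $\mathcal{CH}$, the ordered semigroup $\Ga_\d$ is determined up to isomorphism independently of the choice of nonprincipal $\d\in\be\P$, since any isomorphism of ordered semigroups automatically sends convex subsemigroups to convex subsemigroups and preserves inclusion.

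To this end, fix two nonprincipal ultrafilters $\d_1,\d_2 \in \be\P$. By \L os's theorem, $\Ga_{\d_1}$ and $\Ga_{\d_2}$ are both elementarily equivalent to the ordered semigroup $(\omega,+,0,<)$, and so they satisfy the same complete theory (the Presburger-type theory already mentioned in the paper after Theorem \ref{idealslo}). Since $\P$ is countable and $\omega$ is a countable structure in a countable language, the classical $\aleph_1$-saturation theorem for countably-indexed ultraproducts \cite{changkeisler} gives that each $\Ga_{\d_k}$ is $\aleph_1$-saturated, and each has cardinality $2^{\aleph_0}$. Under $\mathcal{CH}$, $2^{\aleph_0}=\aleph_1$, so both $\Ga_{\d_1}$ and $\Ga_{\d_2}$ are saturated models, of cardinality $\aleph_1$, of the same complete theory.

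By the standard uniqueness theorem for saturated models of a given cardinality, there is an isomorphism $\Ga_{\d_1}\cong\Ga_{\d_2}$ of ordered semigroups. This isomorphism induces an order-isomorphism of their lattices of convex subsemigroups, which, via the correspondence $\Delta\longmapsto\cp$ recalled just before the lemma, yields an order-isomorphism between the chain of primes for $\d_1$ and the chain for $\d_2$ in $\spec\zhat$. In particular these two chains have the same order type, and \emph{a fortiori} the same length. The non-formal inputs doing the work here are the $\aleph_1$-saturation of countably-indexed ultraproducts and the uniqueness of saturated models of a given cardinality; $\mathcal{CH}$ is needed precisely to match the cardinality of $\Ga_\d$ with its degree of saturation so that the uniqueness theorem applies.
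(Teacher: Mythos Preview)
Your proof is correct and follows essentially the same route as the paper: the paragraph immediately preceding the lemma is the paper's proof, and it argues exactly as you do that under $\mathcal{CH}$ the ordered semigroups $\prod_\d\omega$ are all $\aleph_1$-saturated of cardinality $\aleph_1$ with the same complete theory, hence isomorphic, so the order type of their convex subsemigroups---and thus of the corresponding chains of primes---is independent of the nonprincipal $\d$. One small phrasing point: the correspondence identifies the chain of primes with the \emph{entire} set of convex subsemigroups of $\Gamma_\d$ (which is already totally ordered), not merely with ``a maximal chain'' inside some larger poset.
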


It seems interesting to characterize the order-type in this case. But we pass to a brief discussion of what happens when $\mathcal{CH}$ fails.

There is a very large literature (see e.g. \cite{blassmilden,canjarnotCH,canjarcof, mildenshelahcof}) on possible cofinalities of $\omega^\omega/\d$. In particular there is a model of ZFC +$2^{\aleph_0}=\aleph_3$ with two ultrafilters $\d_1$ and $\d_2$ on $\P$ so that  the ultrapower $\prod_{\d_i}\omega$ has cofinality $\aleph_i$ (i=1,2) \cite{canjarnotCH}.

Now we translate this into information about chains  determined by $\d_1$ and $\d_2$, {\em i.e.}, into information about the chains of convex subsemigroups of $\prod_{\d_1}\omega$ and $\prod_{\d_2}\omega$ respectively. We naturally identify $\omega^\P$ with $\omega^\omega$.

Note that the order type of the chains in $\spec\zhat$ (with $(\d_i)_*$ at the {bottom}) is the reverse of the ordering of convex subsemigroups (with $\prod_{\d_i}\omega$ at {top}). {The latter orders are complete}.
Let  $\mathcal{C}_i$ be the ordering of the convex subsemigroups of $\prod_{\d_i}\omega$. We turn Canjar's cofinality results into cofinality results for the $\mathcal{C}_i$.
\begin{thm} $\mathcal{C}_i$ has cofinality $\aleph_i$ $(i=1,2)$.
\end{thm}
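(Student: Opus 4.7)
The plan is to translate Canjar's value $\mathrm{cf}(\prod_{\d_i}\omega)=\aleph_i$ into a cofinality statement for the chain $\mathcal{C}_i$ of convex subsemigroups of $\Gamma_i:=\prod_{\d_i}\omega$, which I read as the smallest cardinality of a chain of proper convex subsemigroups whose union is $\Gamma_i$ (equivalently, the cofinality of $\mathcal{C}_i\setminus\{\Gamma_i\}$). For $x\in\Gamma_i$ let $\Delta_x=\{y\in\Gamma_i:\exists n\in\omega,\ y\leq nx\}$ be the principal convex subsemigroup generated by $x$. Two elements generate the same principal convex subsemigroup exactly when they lie in the same Archimedean class, so $x\mapsto\Delta_x$ factors through an order-isomorphism between $\mathrm{Arch}(\Gamma_i)$ (the chain of Archimedean classes of $\Gamma_i$) and the suborder of principal members of $\mathcal{C}_i$.

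The heart of the argument is to verify that the principal convex subsemigroups are cofinal in $\mathcal{C}_i\setminus\{\Gamma_i\}$. Given a proper $\Delta\in\mathcal{C}_i$, fix $b\in\Gamma_i\setminus\Delta$; if some $x\in\Delta$ had Archimedean class $[x]\geq[b]$ then $b\leq nx$ for some $n$, and using that $\Delta$ is closed under addition and downward closed one would obtain $b\in\Delta$, a contradiction. Hence every $x\in\Delta$ satisfies $[x]<[b]$, which forces $\Delta\subseteq\Delta_b$. Consequently the cofinality of $\mathcal{C}_i$ (in the intended sense) equals the cofinality of $\mathrm{Arch}(\Gamma_i)$ as a linearly ordered set.

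It then remains to show $\mathrm{cf}(\mathrm{Arch}(\Gamma_i))=\mathrm{cf}(\Gamma_i)=\aleph_i$. The inequality $\leq$ is immediate by projecting a cofinal sequence in $\Gamma_i$ to Archimedean classes. Conversely, given a cofinal family $([x_\alpha])_{\alpha<\lambda}$ in $\mathrm{Arch}(\Gamma_i)$, the enlarged set $\{nx_\alpha:n\in\omega,\,\alpha<\lambda\}$ is cofinal in $\Gamma_i$; since $\lambda\in\{\aleph_1,\aleph_2\}$ is uncountable it has cardinality $\max(\aleph_0,\lambda)=\lambda$, and combining with Canjar's theorem supplies the value $\aleph_i$. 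The main obstacle will be the cofinality-of-principals step, which is where the linear order on $\mathcal{C}_i$ and the additive/initial-segment closure conditions must be used in tandem; once that is in hand, the rest is bookkeeping around Archimedean classes and Canjar's input.
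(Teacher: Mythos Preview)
Your argument is essentially the paper's: both pass through the principal convex subsemigroups $\Delta_x=[x]$ to link $\mathrm{cf}(\mathcal{C}_i\setminus\{\Gamma_i\})$ directly to $\mathrm{cf}(\Gamma_i)$, with you making the intermediate step through $\mathrm{Arch}(\Gamma_i)$ explicit. There is one genuine omission, however. In your ``principals are cofinal'' step you show that any proper $\Delta$ satisfies $\Delta\subseteq\Delta_b$ for $b\notin\Delta$, but you never check that $\Delta_b$ is itself proper; equivalently, you never rule out a top Archimedean class in $\Gamma_i$. This is exactly where the paper invokes $\aleph_1$-saturation of $\prod_{\d_i}\omega$ (so that $\Gamma_i\neq[\alpha]$ for any $\alpha$), and you need the same observation---without it the cofinality-of-principals claim, and hence the identification $\mathrm{cf}(\mathcal{C}_i\setminus\{\Gamma_i\})=\mathrm{cf}(\mathrm{Arch}(\Gamma_i))$, is not justified.

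A smaller point: writing ``since $\lambda\in\{\aleph_1,\aleph_2\}$'' in the converse direction reads as assuming the conclusion. What you actually have is $\aleph_i=\mathrm{cf}(\Gamma_i)\leq\max(\aleph_0,\lambda)$; since $\aleph_i>\aleph_0$ this forces $\lambda\geq\aleph_i$, and together with $\lambda\leq\aleph_i$ from the first direction you are done. Rewriting that sentence removes the apparent circularity.
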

\proof  For $\al\in \prod_{\d_i}\omega$, let $[\al]$ be the least convex subsemigroup containing $\al$, $[\al]=\{\be\colon\be\leq n\al\text{ for some }n\in\N\} $. We need only prove the result for $i=1$ (the argument transcribes easily to one for $i=2$). 

Let $\gamma$ be an ordinal and $\{\Gamma_\lambda\colon \lambda<\gamma\}$ be an increasing chain of proper convex subsemigroups, with $\bigcup_{\lambda<\gamma}\Gamma_\lambda=\prod_{\d_1}\omega$ (this being the last element of the chain of convex subsemigroups). Note that by $\aleph_1$-saturation of the ultrapowers $\prod_{\d_1}\omega\not=[\alpha]$ for any $\alpha\in\prod_{\d_1}\omega$. Thus we can select $\al_\lambda>\Gamma_\lambda$, so that $\alpha_\lambda$ are monotone increasing and cofinal in $\prod_{\d_1}\omega$. So $\gamma\geq$ cofinality of $\prod_{\d_1}\omega =\aleph_1$. So cofinality of $\mathcal{C}_1\geq\aleph_1$. 

On the other hand,  we can choose $\{\be_\delta\colon\delta<\aleph_1\}$ cofinal in $\prod_{\d_1}\omega$, and then $\{[\be_\delta]\colon\delta<\aleph_1\}$ is cofinal in $\mathcal{C}_1$. Thus the cofinality of $\mathcal{C}_1\leq \aleph_1$.
\endproof
\begin{cor}The chain of prime ideals strictly between $(\d_i)_*$ and $(\d_i)^*$ has coinitiality $\aleph_i$ $(i=1,2)$.
\end{cor}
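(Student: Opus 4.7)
The plan is to transport the cofinality statement of the preceding Theorem to the chain of prime ideals by composing the two order-theoretic bijections already set up in the paper. No new ideas should be needed; the key step is simply keeping straight which bijection is order-preserving and which is order-reversing.

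First I would identify the chain of primes of $\zhat$ strictly between $(\d_i)_*$ and $(\d_i)^*$. By the usual correspondence of primes under quotients, these are in order-preserving bijection with the primes of $R/(\d_i)_*\cong \prod_{\d_i}\pea$ lying strictly between $\{0\}$ and the maximal ideal $\mu$. Then I would invoke the order-reversing bijection $\Delta\leftrightarrow \cp_\Delta$ between convex subsemigroups of $\Gamma=\prod_{\d_i}\omega$ and prime ideals of $\prod_{\d_i}\pea$ recalled just before Proposition \ref{princ1}, under which $\Gamma\mapsto\{0\}$ and $\{0\}\mapsto \mu$. Restricting, the chain of primes strictly between $\{0\}$ and $\mu$ corresponds in an order-reversing way to the chain of proper nontrivial convex subsemigroups of $\Gamma$.

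Composing the two correspondences, the coinitiality of the chain of primes strictly between $(\d_i)_*$ and $(\d_i)^*$ (that is, the smallest cardinality of a downward-cofinal subset, as we descend toward $(\d_i)_*$) equals the cofinality of the proper nontrivial convex subsemigroups of $\Gamma$ approaching $\Gamma$ from below. By the preceding Theorem this cofinality is $\aleph_i$: removing the bottom element $\{0\}$ from $\mathcal{C}_i$ is irrelevant for upward cofinality, and the proof of that Theorem was already phrased in terms of ascending chains of proper convex subsemigroups whose union is $\prod_{\d_i}\omega$, which is exactly the quantity we need. The main obstacle — really the only one — is the direction-tracking bookkeeping across the composed bijections, ensuring that "coinitiality" on the prime-ideal side matches "cofinality" on the subsemigroup side under an order reversal.
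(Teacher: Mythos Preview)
Your proposal is correct and follows exactly the route the paper intends: the corollary is stated without proof, relying on the order-reversing correspondence between convex subsemigroups of $\prod_{\d_i}\omega$ and primes containing $(\d_i)_*$ already set up before Proposition~\ref{princ1}, together with the preceding Theorem on the cofinality of $\mathcal{C}_i$. You have simply made explicit the bookkeeping that the paper leaves implicit, including the observation that the Theorem's proof is really about chains of \emph{proper} convex subsemigroups ascending to $\prod_{\d_i}\omega$, which is precisely what is needed after excluding the endpoints.
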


\noindent
\emph{Note.} By using more general results of Canjar \cite{canjarnotCH}
one can get similar results involving both cofinality and coinitiality. We are content to provide basic examples, and assume that very hard set theory is needed to survey all possibilities.

\

Henceforward, serious set theory is not central in our work. In particular we will emphasize {elementary equivalence} over {isomorphism}.

\section{The quotients $\prod_{i\in I}V_i/\fp$}\label{R/p}\label{quotient}

Let $R=\prod_{i\in I}V_i$ with $V_i$ valuation rings, and $\fp\in\spec R$. 
 If $\fp$ is maximal, $R/\fp$ is naturally isomorphic to the ultraproduct  of the residue fields $\prod_\d k_i$ and if $\fp$ is minimal $R/\fp$ is naturally isomorphic to the ultraproduct $\prod_\d V_i$. If $\fp$ is principal, say $\fp=(f)$,  then $R/(f)$ is isomorphic to  $V_i/(f(i))$, for some $i\in I$. If $\d$ is principal, say generated by $j$,  and $\fp$  belongs to the chain of $\d$  then 
since $\prod_{i\in I}V_i/\d_*\cong \prod_\d V_i\cong V_j$, we have  $\prod_{i\in I}V_i/\fp\cong V_j/\fp_j$, where $\fp_j$ is the image of $\fp/\d_*$ in $V_j$.
  In the rest of this section we restrict only  to  $\fp$  in the chain associated to a nonprincipal $\d\in\be I$. 
  Moreover, we assume  the  $ V_i$ Henselian   and the ultraproduct of the residue fields $\prod_\d k_i$ of characteristic 0. Note that this is true for $\zhat$.  In this case we  show that in a given chain all quotients $R/\fp$, $\fp$ nonmaximal, are elementary equivalent, but {give} no idea how many isomorphism types there are.
If $\cp $ is the prime ideal  in $\prod_\d V_i$  corresponding to $\fp$ then $R/\fp\cong \prod_\d V_i/\cp .$ We first prove using Fornasiero's embedding theorem that $R/\fp$
is isomorphic to a subring of $\prod_\d V_i$, to do that we  introduce some notation. 

Let $V:=\prod_\d V_i$ and $K$ its fraction field,  $\widetilde\Gamma$ the group generated by $\Gamma:=\prod_\d\Gamma_i$,  $v: K^*\to\widetilde\Gamma$ the corresponding valuation and  $k=\prod_\d V_i/\mu_i$ the residue field. The $V_i$ are  Henselian and so is $K$.  The field $k$ has characteristic 0,  hence  we can lift $k$ and assume that $k\sub V\sub K$ (see, \emph{e.g.}, \cite[Lemma\,3.8]{fornasiero}). By \cite[Lemma\,3.10]{fornasiero} there is a map (1-\emph{good section}) $s:\widetilde\Gamma\to K^*$ satisfying $s(0)=1, vs=\id, s(-\gamma)=(s(\gamma))^{-1}$, such that  the corresponding co-cycle $$\mathfrak{f}: \widetilde\Gamma\times\widetilde\Gamma\to K^*: \mathfrak{f}(\gamma,\delta)=\frac{s(\gamma)s(\delta)}{s(\gamma+\delta)}$$
has its image in $k^*$ (\emph{i.e.}, $\mathfrak{f}$ is a \emph{factor set}). By \cite[Lemma\,5.1]{fornasiero} we can assume that $K$ is a truncation-closed valued subfield of  $k(( \widetilde\Gamma,  \mathfrak{f}))$, where the latter is the additive group  $k(( \widetilde\Gamma))$ endowed with multiplication given by $t^\gamma t^\delta=\mathfrak{f}(\gamma,\delta)t^{\gamma+\delta},$ and its standard valuation.

Consider the subsemigroup $\Delta$ corresponding to $\cp$ as above and $\widetilde\Delta$, the subgroup of $\widetilde\Gamma$ generated by it. Hence  with our new notation $\cp=\{x\in V\st v(x)>\Delta\}$.  Note that  $k(( \widetilde\Delta,  \mathfrak{f}))\sub k(( \widetilde\Gamma,  \mathfrak{f}))$, as truncation-closed  valued subfield. Let $W=k(( \widetilde\Delta,  \mathfrak{f}))\cap V.$ So $W$ is a subring  of $V$ and clearly \begin{equation}
\label{directsum}
V=W\oplus \mathcal P,
\end{equation}
 $V/\mathcal P\cong W$  so $W$ is \emph{lift} of  the domain $V/\mathcal P$ in $V$.

 How unique is $W$, as subring of $\prod_\d V_i$ mapping onto $\Delta$ (so disjoint from $\cp$)? $W$ is clearly maximal with this property. 
  
 So we can say of $R/\fp$ that is isomorphic to {a} maximal subring of $\prod_\d V_i$ mapping {onto} $\Delta$ under $v$.   We have not been able to say much more in general, even for $\zhat$, in an outrageous universe. 
 
 \
 
{Assuming} $R=\zhat$ and $\mathcal{CH}$ holds we can be quite explicit (though not canonical). We sketch the situation, maintaining the notations from above.
 
 Now $\prod_\d\pea$ is $\aleph_1$-saturated and of cardinality $\aleph_1$, and by Ax-Kochen-Ershov is isomorphic to the ring of power series (with well-ordered support $<\omega_1$), with coefficients in $k_\d=\prod_\d\gp$, and exponents in $\prod_\d\omega$. $\Delta$ is an initial segment ($\not=\{0\}$) of the latter, and $\cp$ is the set of power series supported above $\Delta$. As a complementary $k_\d$-subspace we can take the power series supported on $\Delta$, $k_\d[[t^\Delta]]_{<\omega_1}$. Note that $\Delta$ is the nonnegative part of a model  of {Presburger} arithmetic (a $\Z$-group). What are the possibilities for $\Delta$? Well, $\Delta$ can be $\omega$. But this is the only possibility for countable $\Delta$, for unless $\Delta=\omega$ $\aleph_1$-saturation gives the existence of an element $\gamma\in\prod_\d\omega$, $\gamma >\omega$ but $\gamma<\Delta\setminus\omega$, contradicting that $\Delta$ is an initial segment. So, unless $\Delta=\omega$, $\Delta$ is of cardinal $\aleph_1$.
 
Obviously $\omega$ is not $\aleph_1$-saturated, but among the other $\Delta$ some are $\aleph_1$-saturated and some are not. $\prod_\d\omega$ is $\aleph_1$-saturated, and of course has cofinality $\aleph_1$. But no $\Delta$ of cofinality $\aleph_0$ can be $\aleph_1$-saturated.

 \begin{lem} There are nonstandard $\Delta$ of cofinality $\aleph_0$.
 \end{lem}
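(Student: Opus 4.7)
The plan is to build $\Delta$ as the union of a countable increasing chain of principal convex subsemigroups $[\gamma_0]\subset[\gamma_1]\subset\cdots$, where each $\gamma_n\in\prod_\d\omega$ is chosen so that $\gamma_{n+1}\notin[\gamma_n]$, and with $\gamma_0$ already nonstandard. The principal subsemigroup notation $[\alpha]=\{\beta:\beta\leq k\alpha\text{ for some }k\in\N\}$ from the proof of the previous cofinality theorem will be used throughout.

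First, choose $\gamma_0\in\prod_\d\omega$ to be any nonstandard element, for instance the class of the identity function $p\mapsto p$, which is nonstandard because $\d$ is nonprincipal. Then inductively define $\gamma_{n+1}$ by realizing the countable partial type
\[
\Sigma_n(x)=\{\,x>k\gamma_n:k\in\N\,\}
\]
over $\prod_\d\omega$. This type is finitely satisfiable (since $\prod_\d\omega$ has no maximum, each finite fragment is witnessed) and uses only countably many parameters, so by the $\aleph_1$-saturation of $\prod_\d\omega$ it is realized. The witness $\gamma_{n+1}$ then lies strictly above every integer multiple of $\gamma_n$, guaranteeing $[\gamma_n]\subsetneq[\gamma_{n+1}]$. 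Set $\Delta=\bigcup_{n<\omega}[\gamma_n]$.

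It is routine to verify that $\Delta$ is a convex subsemigroup of $\prod_\d\omega$: it is an initial segment since each $[\gamma_n]$ is, and if $\alpha\leq k\gamma_n$ and $\beta\leq l\gamma_m$ with $n\leq m$, then $\alpha+\beta\leq(k+l)\gamma_m\in\Delta$. Further, $\Delta$ is nonstandard because $\gamma_0\in\Delta$ is already above every element of $\omega$, and the chain $(\gamma_n)_{n<\omega}$ is by construction cofinal in $\Delta$, giving $\mathrm{cof}(\Delta)=\aleph_0$.

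The only real point requiring care, and the main obstacle, is to verify that $\Delta$ is a proper subsemigroup of $\prod_\d\omega$. For this we invoke the observation that $\prod_\d\omega$ itself has cofinality strictly greater than $\aleph_0$: indeed, any countable increasing sequence in $\prod_\d\omega$ generates a countable partial type $\{x>\alpha_n:n<\omega\}$ which is finitely satisfiable (as $\prod_\d\omega$ is unbounded) and hence realized by $\aleph_1$-saturation, so the sequence cannot be cofinal. Were $\Delta=\prod_\d\omega$, then the countable set $\{k\gamma_n:k,n\in\N\}$ would be cofinal in $\prod_\d\omega$, contradicting this. Hence $\Delta$ is a proper nonstandard convex subsemigroup of cofinality $\aleph_0$, as required.
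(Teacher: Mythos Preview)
Your proof is correct and follows essentially the same approach as the paper: both construct a sequence $(\gamma_n)$ via $\aleph_1$-saturation with $\gamma_{n+1}$ above every standard multiple of $\gamma_n$, and take $\Delta$ to be the convex subsemigroup they generate. You are simply more explicit than the paper on two points it leaves to the reader --- choosing $\gamma_0$ nonstandard and verifying that $\Delta$ is proper (the paper has already noted just above that $\prod_\d\omega$ has cofinality $\aleph_1$, so properness is implicit there).
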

 \proof Select (via $\aleph_1$-saturation of $\prod_\d\omega$) a sequence $$\gamma_0<\gamma_1<\cdots<\gamma_n<\cdots$$ so that $\gamma_{n+1}>m\gamma_n$ for all $m\in\omega$. Let $\Delta$ be the least convex subsemigroup containing the $\gamma_j$. The latter form a cofinal $\omega$-sequence in $\Delta$. 
 \endproof
 \begin{lem} All $\Delta$ are  the nonnegative part of  models of Presburger arithmetic.
 \end{lem}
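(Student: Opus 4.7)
The plan is to exhibit $\widetilde{\Delta}$, the subgroup of $\prod_\d\Z$ generated by $\Delta$, as a convex subgroup of the $\Z$-group $\prod_\d\Z$, and then verify the Presburger axioms directly by reducing them to the ambient ultrapower. Since $\Delta$ is a nontrivial initial segment of $\prod_\d\omega$ closed under addition, it contains some $\gamma>0$, hence (by convexity) $1\in\Delta$, and so $\omega\subseteq\Delta$. In particular $1$ is the least positive element of $\widetilde\Delta$, and the order on $\widetilde\Delta$ inherited from $\prod_\d\Z$ is discrete with $\widetilde\Delta$ an ordered abelian group.

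The first key observation I would establish is the identity
\[
\widetilde\Delta=\{\alpha\in\textstyle\prod_\d\Z\colon |\alpha|\in\Delta\}.
\]
One inclusion is immediate; for the other, if $\alpha=\beta-\gamma$ with $\beta,\gamma\in\Delta$ and $\alpha\geq 0$, then $0\leq\alpha\leq\beta$, so by convexity of $\Delta$ in $\prod_\d\omega$ we get $\alpha\in\Delta$; symmetrically for $\alpha\leq 0$. Consequently $\widetilde\Delta$ is a convex subgroup of $\prod_\d\Z$.

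The second step is to verify the divisibility axioms of Presburger arithmetic, namely $[\widetilde\Delta:n\widetilde\Delta]=n$ for each $n\geq 2$. Given $\alpha\in\widetilde\Delta$, apply Euclidean division in $\prod_\d\Z$ (which holds by \L os, since it holds in $\Z$) to write $\alpha=nq+r$ with $q\in\prod_\d\Z$ and $0\leq r<n$. Then $|q|\leq |\alpha|+1$, and since $|\alpha|\in\Delta$ and $1\in\Delta$, the element $|\alpha|+1$ lies in $\Delta$, hence by convexity $|q|\in\Delta$, i.e. $q\in\widetilde\Delta$. Thus $\{0,1,\dots,n-1\}$ is a complete set of coset representatives for $n\widetilde\Delta$ in $\widetilde\Delta$, and they are pairwise non-congruent mod $n\widetilde\Delta$ because they already are in $\prod_\d\Z$.

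Combining these two steps, $\widetilde\Delta$ is a discretely ordered abelian group with smallest positive element $1$ and with $\widetilde\Delta/n\widetilde\Delta\cong\Z/n\Z$ for every $n\geq 2$; this is the standard axiomatization of a $\Z$-group, hence a model of Presburger arithmetic, and $\Delta=\widetilde\Delta^{\geq 0}$ by the displayed identity. No real obstacle arises: the only point requiring care is ensuring that Euclidean division performed in the ambient ultrapower keeps the quotient inside $\widetilde\Delta$, which is exactly where convexity of $\Delta$ (together with $1\in\Delta$) is used.
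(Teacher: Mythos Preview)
Your argument is correct. The paper states this lemma without proof, so there is nothing to compare against; what you have written is the natural verification that a nontrivial convex subgroup of a $\Z$-group is again a $\Z$-group, specialized to the ultrapower $\prod_\d\Z$. The two ingredients you isolate---that $\widetilde\Delta=\{\alpha:|\alpha|\in\Delta\}$ is convex, and that Euclidean division in the ambient $\Z$-group lands back in $\widetilde\Delta$---are exactly the right ones, and your use of $1\in\Delta$ (hence $\omega\subseteq\Delta$) to absorb the remainder term is the standard move. One cosmetic remark: the lemma as stated in the paper literally covers $\Delta=\{0\}$ as well (it appears later in the list of five types), which is of course not the nonnegative part of a $\Z$-group; your tacit restriction to nontrivial $\Delta$ matches the evident intended reading.
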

 \begin{lem}All $\Delta$ of cofinality $\aleph_1$ are $\aleph_1$-saturated.
 \end{lem}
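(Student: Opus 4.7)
The aim is to realize every complete $1$-type $p(x)$ over a countable parameter set $A \subseteq \Delta$ inside $\Delta$; the $n$-type case then reduces to this by the standard coordinate-by-coordinate induction. The plan is to exploit the $\aleph_1$-saturation of the countable ultrapower $\prod_\d\omega$, together with the cofinality hypothesis on $\Delta$, to trap a realization of $p$ inside the convex subset $\Delta$.

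First I would verify that $\Delta \preceq \prod_\d\omega$ in the Presburger language $\{+,-,0,1,<,\equiv_n\}_{n\geq 2}$. Both structures are Presburger models by the previous lemma, and the inclusion preserves $+,<,0,1$; moreover the $\Z$-group generated by $\Delta$ is a convex subgroup of $\prod_\d\Z$, hence pure, so the divisibility predicates $\equiv_n$ are preserved as well. Presburger quantifier elimination then yields the elementary embedding.

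Next, using the cofinality hypothesis $\mathrm{cof}(\Delta) = \aleph_1$, pick $b \in \Delta$ above the countable set $\omega \cup \{na \colon a \in A,\ n \in \omega\} \subseteq \Delta$; in particular $b$ is nonstandard and $b > N\alpha$ for every $\alpha \in A$ and every standard $N$. Set $p'(x) := p(x) \cup \{x \leq b\}$. The key claim is that $p'$ is finitely satisfiable in $\Delta$. By QE, every finite fragment of $p'$ simplifies to constraints of the form $\alpha < x \leq \beta$ (with $\alpha \in A \cup \{0\}$ and $\beta \in A \cup \{b\}$) together with finitely many congruences $x \equiv r_j \pmod{n_j}$ of combined lcm $N$. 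In the case where $p$ has an upper bound in $A$, one takes $\beta \in A \subseteq \Delta$ and any realization of $p$ in an elementary extension lies in $(\alpha, \beta] \subseteq \Delta$ by convexity. Otherwise $\beta = b$, and the inequality $b - \alpha \geq N$ (immediate from $b > N\alpha$ when $\alpha \geq 1$, and from $b > N$ when $\alpha = 0$) guarantees a congruence solution inside $(\alpha, b] \subseteq \Delta$. Since $\Delta \preceq \prod_\d\omega$, $p'$ is then a consistent type in $\prod_\d\omega$ over the countable set $A \cup \{b\}$, realized by some $\gamma \in \prod_\d\omega$ by $\aleph_1$-saturation of the ultrapower. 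The constraint $\gamma \leq b$ and the convexity of $\Delta$ force $\gamma \in \Delta$, and $\gamma$ realizes $p$.

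The main obstacle is this finite-satisfiability step in the unbounded-above case: one has to verify inside a possibly-nonstandard interval $(\alpha, b]$ that any finite congruence system admits a solution. The specific choice of $b$ dominating every standard multiple of every element of $A$ is exactly what provides the necessary ``room,'' and is where the cofinality hypothesis is used essentially.
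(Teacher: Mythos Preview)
Your argument is correct. Both your proof and the paper's hinge on Presburger quantifier elimination and on using the cofinality hypothesis to ensure that the cut determined by a countable type is realized inside $\Delta$; the difference is in how the congruence conditions are handled. The paper argues entirely inside $\Delta$: it reduces $p$ to a cut $A_1<x<A_2$ together with congruence conditions, observes that cofinality $\aleph_1$ forces the cut to be nonempty (indeed infinite) in $\Delta$, and then asserts that an infinite convex set meets every congruence class, so $p$ is realized. You instead establish $\Delta\preceq\prod_\d\omega$, manufacture a bound $b\in\Delta$ dominating $A$, show $p\cup\{x\le b\}$ is finitely satisfiable, invoke the $\aleph_1$-saturation of the ambient ultrapower to get a realization $\gamma$, and use convexity to pull $\gamma$ back into $\Delta$.

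What your route buys is an explicit mechanism for realizing \emph{all} the (infinitely many) congruence conditions simultaneously: the paper's sentence ``it clearly meets every congruence class modulo every standard integer, and so $p$ is realized'' is really a finite-satisfiability statement, and the passage to a single realizing element tacitly uses exactly the ambient-saturation step you spell out. The paper's version is shorter and self-contained; yours is a little longer but pins down that step. One small remark: your justification ``$b-\alpha\ge N$ immediate from $b>N\alpha$ when $\alpha\ge 1$'' is not literally correct for $\alpha=1$, but since you chose $b$ above $n\alpha$ for \emph{every} standard $n$, the interval $(\alpha,b]$ has nonstandard length and the conclusion stands.
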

 \proof
 Let $p(x,A)$ be a Presburger 1-type over a countable subset $A$ of $\Delta$, where $\Delta$ has cofinality $\aleph_1$. We use the quantifier-elimination of Presburger, and by some routine manipulations we can assume that $p$ is given by congruence conditions modulo standard integers and by conditions $$A_1<x<A_2$$
 where $A_1$ and $A_2$ are countable (if $p$ forces some $x=a$, $p$ \emph{is} realized).
  If $A_1<x<A_2$ defines a finite nonempty set in $\Delta$ then the type is obviously realized if consistent.
  If $A_1<x<A_2$ defines an infinite  set in $\Delta$ then  it clearly meets every congruence class modulo every standard integer,
   and so $p$ is realized.
 If $A_1<x$ defines the empty set in $\Delta$, then $A_1$ is cofinal, contradicting the cofinality of $\Delta$. If $x<A_2$ defines the empty set in $\Delta$, then $A_2\supseteq\{0\}$, and the type is inconsistent.
 \endproof
 \begin{cor} Any two $\Delta$ of cofinality $\aleph_1$ are isomorphic.
 \end{cor}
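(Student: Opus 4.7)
The plan is to deduce the corollary from the classical theorem that any two saturated models of the same complete first-order theory having the same cardinality are isomorphic (obtained by a back-and-forth construction). Given two convex subsemigroups $\Delta_1, \Delta_2 \subseteq \prod_\d \omega$ of cofinality $\aleph_1$, the three ingredients I need to supply are: (i) $\Delta_1 \equiv \Delta_2$ in a suitable language, (ii) each $\Delta_i$ is $\aleph_1$-saturated, and (iii) each $\Delta_i$ has cardinality exactly $\aleph_1$.

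Ingredients (i) and (ii) have already been set up. Ingredient (ii) is exactly the content of the previous lemma. For (i), the previous lemma states that each $\Delta$ is the nonnegative part of a model of Presburger arithmetic; since Presburger is a complete theory in the language $\{+,0,1,<, \{D_n\}_{n\geq 2}\}$, and since the nonnegative part is first-order definable (and bi-interpretable with the whole $\Z$-group via the natural Grothendieck construction), the induced theory of the nonnegative part is also complete, so $\Delta_1 \equiv \Delta_2$. For (iii), under $\mathcal{CH}$ we have $|\prod_\d \omega| = 2^{\aleph_0} = \aleph_1$, hence $|\Delta_i| \leq \aleph_1$; conversely the existence of an $\aleph_1$-sequence cofinal in $\Delta_i$ forces $|\Delta_i| \geq \aleph_1$, so $|\Delta_i| = \aleph_1$.

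Combining these, each $\Delta_i$ is an $\aleph_1$-saturated structure of cardinality $\aleph_1$, i.e., a saturated model in the classical sense, and the two structures are elementarily equivalent. The standard back-and-forth argument (enumerating $\Delta_1$ and $\Delta_2$ by ordinals below $\omega_1$ and using $\aleph_1$-saturation to realize the countable partial types encountered at each step) produces the desired isomorphism $\Delta_1 \cong \Delta_2$. I do not expect a genuine obstacle here — this is a textbook application of saturated-model uniqueness. The only mildly delicate point is choosing the language so that completeness of the induced theory on the nonnegative part is transparent; once one fixes a Presburger-style language and notes that $\widetilde\Delta$ is first-order definable from $\Delta$, the completeness of Presburger arithmetic does the rest.
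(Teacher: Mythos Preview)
Your proposal is correct and matches the paper's own proof essentially verbatim: the paper simply observes that under $\mathcal{CH}$ any two such $\Delta$ are saturated models of (nonnegative) Presburger of cardinality $\aleph_1$, and invokes uniqueness of saturated models. You have spelled out the three ingredients (elementary equivalence via completeness of Presburger, $\aleph_1$-saturation from the preceding lemma, cardinality $\aleph_1$ via $\mathcal{CH}$) more explicitly than the paper does, but the argument is the same.
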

\proof Any two such $\Delta$ are saturated models of nonegative Presburger  of cardinal $\aleph_1.$
\endproof
Now we consider nonstandard $\Delta$ of cofinality $\aleph_0$.

The ordering of all $\Delta$ is complete (\emph{i.e.} has arbitrary suprema and infima), $\{0\}$ and $\omega$ the first two elements, $\prod_\d\omega$ the last.

Suppose $\Delta$ has cofinality $\aleph_0$, $\Delta\not=\omega$. We analyze the archimedean classes of $\Delta$. 
 These are naturally linearly ordered. Suppose there is a top class, that of $\gamma$, say. Let $\Delta'$ be the convex subset of elements below the class of $\gamma$. $\Delta'$ is a convex subsemigroup, and by saturation of $\prod_\d\omega$ $\Delta'$ has cofinality $\aleph_1$,
(suppose there is $A\sub \Delta'$ countable and cofinal on $\Delta'$, then consider the type $A<x, nx<\gamma, n\in\N$, this is realized in $\prod_\d\omega$,  the realization $\delta$ must be in $\Delta$ since $\gamma\in\Delta$ and $\Delta$ is convex, but cannot be since  $\Delta'<\delta<[\gamma]$) 
 and so is saturated. $\Delta'$ is clearly pure in $\Delta$, 
  so by {\cite[Theorem 2.8]{Prest}} is a direct summand.  
  Let $\Delta=G\oplus\Delta'$, $G$ is archimedean so embeddable in $\R$. Using $\aleph_1$-saturation (against constants from $G$) one easily sees that $G\cong\R$. 
 
 Thus there is at most \emph{one} isomorphism type of $\Delta$ if there is a top class.
 
 If there is no top class there is a cofinal $\omega$-sequence of classes in $\Delta$, say of $$\gamma_0 < \gamma_1 < \cdots<\gamma_m <\cdots$$
 Now  it is easily seen (again by $\aleph_1$-saturation) that there is  such a sequence in which each $\gamma$ is $n$-divisible for all $n$ (and $\not=0$).
  But clearly (from quantifier elimination for Presburger) the partial $\omega$-type in $(w_0,w_1,\dots)$ saying each $w_j>0$, $w_{j+1}$ is bigger than each $\Q$-linear combination of the $w_l,\;l\leq j$, and each $w_j$ is $n$-divisible for all $n$, is \emph{complete}. 
    Thus by $\aleph_1$-saturation, any two realizations of the complete type are automorphic in $\prod_\d\omega$, and the convex $\Delta$ determined by the realizations are \emph{isomorphic}. Thus gives:
 \begin{lem} There are $\Delta\not=\omega$ of cofinality $\aleph_0$, with no largest archimedean class, and any two are isomorphic (as ordered semigroups).
 \end{lem}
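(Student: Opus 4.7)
The plan is to build such a $\Delta$ explicitly from a carefully chosen cofinal sequence, and then prove uniqueness by a back-and-forth argument. For existence, I would invoke $\aleph_1$-saturation of $\prod_{\d}\omega$ to realize the partial type in variables $(w_0,w_1,\dots)$ asserting that each $w_j>0$, that $w_{j+1}$ exceeds every $\Q$-linear combination of $w_0,\dots,w_j$, and that each $w_j$ is $n$-divisible for every standard $n$. Consistency follows routinely, since $\prod_{\d}\omega$ is the nonnegative part of a nonstandard model of Presburger arithmetic and so admits arbitrarily large $n$-divisible elements above any prescribed bound. Given such a realization $(\gamma_j)_{j<\omega}$, I would take $\Delta$ to be the convex subsemigroup of $\prod_{\d}\omega$ generated by the $\gamma_j$; the resulting cofinal $\omega$-sequence of strictly increasing archimedean classes yields cofinality $\aleph_0$, the absence of a largest archimedean class, and $\Delta\not=\omega$ (each $\gamma_j$ being nonstandard).

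For uniqueness up to ordered-semigroup isomorphism, my plan is to argue that the partial type above is in fact \emph{complete}, using Presburger quantifier elimination: every formula with parameters from $w_0,\dots,w_k$ reduces to a Boolean combination of congruences modulo standard integers and $\Q$-linear order inequalities, and both kinds of condition are pinned down by the clauses of the type (divisibility determines all congruences, and the $\Q$-linear-independence clause determines the orderings). Once completeness is established, $\aleph_1$-saturation of $\prod_{\d}\omega$ permits a standard back-and-forth between any two realizations $(\gamma_j)$ and $(\gamma'_j)$, producing an automorphism of $\prod_{\d}\omega$ that sends one sequence to the other. Since an ordered-semigroup automorphism preserves convex closure, its restriction gives the desired ordered-semigroup isomorphism between the two $\Delta$'s.

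The main obstacle I anticipate is verifying completeness of the partial type carefully: one must confirm that the divisibility clauses together with the $\Q$-linear ordering clauses really determine every Presburger-definable relation among the $w_j$, which demands tracking congruence classes modulo each standard integer jointly with the ordering data, and making explicit that $\Q$-linear combinations make sense in this divisible context. Once completeness is secured, both existence (by realization of the type) and uniqueness (by back-and-forth) follow as standard consequences of $\aleph_1$-saturation of $\prod_{\d}\omega$, and the passage from an automorphism of the ambient ultrapower to an isomorphism of the convex hulls is automatic.
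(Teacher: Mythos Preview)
Your approach is essentially the paper's own: the paper also writes down the partial $\omega$-type asserting $w_j>0$, $n$-divisibility of each $w_j$, and that $w_{j+1}$ dominates every $\Q$-linear combination of $w_0,\dots,w_j$; it notes (via Presburger quantifier elimination) that this type is complete, and concludes by $\aleph_1$-saturation that any two realizations are automorphic in $\prod_\d\omega$, whence the convex subsemigroups they generate are isomorphic.

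There is, however, one step your outline skips, and it is the one piece of work the paper actually singles out. Your uniqueness argument compares two $\Delta$'s already presented as convex hulls of realizations of the type. But the lemma asserts uniqueness among \emph{all} $\Delta\neq\omega$ of cofinality $\aleph_0$ with no largest archimedean class, and such a $\Delta$ comes to you only with some cofinal $\omega$-sequence of representatives of strictly increasing archimedean classes---not with $n$-divisible ones. The paper inserts precisely this reduction: given an arbitrary such $\Delta$, one uses $\aleph_1$-saturation of $\prod_\d\omega$ (one variable at a time, over the finitely many parameters already chosen together with an upper bound from the next class) to replace the cofinal sequence by one in which each term is $n$-divisible for every standard $n$, hence a realization of the type whose convex hull is still $\Delta$. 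Only after this does the completeness/automorphism machinery bite. Without this bridge, your back-and-forth only compares the particular $\Delta$'s you constructed, not arbitrary ones satisfying the hypotheses.
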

 \proof Existence is clear by $\aleph_1$-saturation. Uniqueness is proved above.
 \endproof
 
 What can we say about the other case, $\Delta\cong\R\oplus\Delta'$, where $\Delta'$ has cofinality $\aleph_1$? Are there such $\Delta$? We have shown that there is at most one isomorphism type of $\Delta$ (and our discussion shows that it must be an isomorphism type different from that of the preceding lemma).
 
 There are such $\Delta$ in abundance. For let $\gamma$ represent \emph{any} nonstandard archimedean class,
   and let $\Delta'$ consist of all $\delta$ in smaller archimedean classes (clearly $\Delta'$ is a convex subsemigroup). Let $\Delta$ consist of all archimedean classes less than or equal to the class of $\gamma$. The class of $\gamma$ is the top class of $\Delta$, 
    and $\Delta\cong \R\oplus\Delta'$. 
 
 So we have established a sharp limitation on $\Delta$  under our hypothesis.
 \begin{thm}$\mathcal{(CH)}$ There are 5 isomorphism types of convex subsemigroups $\Delta$ of $\prod_\d\omega$, namely the types of 
 
 \emph{(i)} $\{0\}$;
 
 \emph{(ii)}  $\omega$;
 
 \emph{(iii)} $\Delta$ of cofinality $\aleph_1$;
 
 \emph{(iv)} $\Delta$  of cofinality $\aleph_0$, and no top archimedean class, and
 
 \emph{(v)} $\Delta$  nonstandard, of cofinality $\aleph_0$ and top archimedean class.
 \end{thm}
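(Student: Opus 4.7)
The plan is to assemble the preceding lemmas into a single classification, checking three things: (a) every nonzero convex subsemigroup of $\prod_\d\omega$ falls into one of the five listed types, (b) the invariants distinguishing the five types (cardinality, cofinality, existence of a top archimedean class) are isomorphism invariants that pairwise separate them, and (c) each of the five types is actually realized.

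For (a), I would first dispose of the standard cases: if $\Delta$ is countable then the earlier $\aleph_1$-saturation argument forces $\Delta=\{0\}$ or $\Delta=\omega$, giving (i) and (ii). Otherwise under $\mathcal{CH}$ we have $|\Delta|=\aleph_1$ and the cofinality of $\Delta$ is either $\aleph_1$ or $\aleph_0$. The first subcase is (iii). In the second subcase, split according to whether the linearly ordered set of archimedean classes of $\Delta$ has a maximum: if not we are in (iv), and if so the analysis above (pureness of $\Delta'$ and Prest's direct-summand theorem) puts us in (v) with $\Delta\cong\R\oplus\Delta'$.

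For (b), the key invariants are elementary: cardinality separates \{(i),(ii)\} from \{(iii),(iv),(v)\}; among the standard types, $\omega$ is infinite while $\{0\}$ is not; cofinality separates (iii) from (iv) and (v); and the existence of a largest archimedean class separates (v) from (iv). All four properties are preserved under isomorphism of ordered semigroups, so the five types are pairwise non-isomorphic. For (c), realization is given by the explicit constructions already in the text: (i) and (ii) trivially; (iii) by $\prod_\d\omega$ itself (whose cofinality is $\aleph_1$ by Canjar-style $\aleph_1$-saturation, or directly since $\prod_\d\omega$ has cardinality $\aleph_1$ and is not $[\alpha]$ for any $\alpha$); (iv) by the cofinal sequence $\gamma_0<\gamma_1<\cdots$ with $\gamma_{n+1}>m\gamma_n$ constructed via saturation; (v) by choosing any nonstandard archimedean class $[\gamma]$ and letting $\Delta$ be the union of all archimedean classes $\leq[\gamma]$.

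The main obstacle — already handled in the lemmas — is the uniqueness in cases (iii), (iv) and (v), where one must see that each is determined up to isomorphism as an ordered semigroup. For (iii) this is saturation plus the fact that there is a unique saturated model of nonnegative Presburger of cardinality $\aleph_1$; for (iv) it is the completeness of the partial type expressing ``a strictly increasing $\omega$-sequence of positive $\Q$-independent $n$-divisible elements'' together with $\aleph_1$-saturation; for (v) it reduces via the direct sum decomposition $\R\oplus\Delta'$ to the uniqueness in (iii) applied to $\Delta'$, after verifying $G\cong\R$ by $\aleph_1$-saturation over constants from $G$. No further argument beyond assembling these is needed.
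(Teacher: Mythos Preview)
Your proposal is correct and follows exactly the paper's approach: the theorem is stated immediately after the sequence of lemmas and remarks that establish existence and uniqueness for each of the five cases, and its ``proof'' in the paper is simply the preceding discussion. Your write-up is in fact more explicit than the paper's, which gives no separate proof at all, but the content (case split by cardinality, then cofinality, then presence of a top archimedean class; uniqueness via saturation, completeness of the divisible-sequence type, and the $\R\oplus\Delta'$ decomposition) is identical.
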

 This in turns gives a classification of the quotients by $\fp$ for $\fp$ associated to a nonprincipal $\d$. As above we assume $\mathcal{CH}$.
 \begin{thm}$\mathcal{(CH)}$\label{quotientsch} Let $\d\in\be\P$ be nonprincipal. Then the isomorphism types of $\zhat/\fp$ where $\fp$ is associated to $\d$ are exactly those of:
 
 \emph{(i)} $\prod_\d\gp\;(=k_\d)$;
 
 \emph{(ii)} $k_\d\tv{t}$;
 
 \emph{(iii)}  $k_\d\tv{t^\Delta}_{<\omega_1}$, where $\Delta$ is a convex subsemigroup of $\prod_\d\omega$ of cofinality $\aleph_1$;
 
 \emph{(iv)} $k_\d\tv{t^\Delta}_{<\omega_1}$, where $\Delta$ is a convex subsemigroup of $\prod_\d\omega$ of cofinality $\aleph_0$ and no top archimedean class, and
 
 \emph{(v)} $k_\d\tv{t^\Delta}_{<\omega_1}$, where $\Delta$ is a nonstandard convex subsemigroup of $\prod_\d\omega$ of cofinality $\aleph_0$ and has top archimedean class.
 \end{thm}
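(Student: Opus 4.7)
My plan is to reduce each $\zhat/\fp$ to a Hahn-type power series ring supported on the associated $\Delta$, and then read off the five cases directly from the preceding classification theorem.

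First I would use the correspondence from Section 4: passing from $\fp$ to the corresponding prime $\cp$ of $V := \prod_\d \pea$ via the canonical projection $\pi_\d$, Theorem \ref{d*} gives $\zhat/\fp \cong V/\cp$, and $\cp$ corresponds under the order-reversing bijection to a convex subsemigroup $\Delta$ of $\prod_\d\omega$. Next, under $\mathcal{CH}$ the ring $V$ is $\aleph_1$-saturated of cardinality $\aleph_1$, hence by Ax--Kochen--Ershov it is isomorphic to the truncation-closed Hahn-type ring $k_\d\llbracket t^{\prod_\d\omega}\rrbracket_{<\omega_1}$ (with multiplication twisted by the factor set from the Fornasiero-style embedding developed earlier in this section). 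Under this identification, $\cp$ is exactly the ideal of series supported strictly above $\Delta$, and the direct sum decomposition $V = W \oplus \cp$ of (\ref{directsum}) yields $V/\cp \cong W = k_\d\llbracket t^\Delta\rrbracket_{<\omega_1}$.

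Feeding the five $\Delta$-types from the preceding theorem into this formula produces the five listed rings: $\Delta = \{0\}$ collapses the Hahn ring to $k_\d$, and $\Delta = \omega$ gives $k_\d\llbracket t\rrbracket$ (every subset of $\omega$ is automatically of cardinality $<\omega_1$); the remaining three $\Delta$-types transfer verbatim to (iii)--(v). For pairwise non-isomorphism, the value semigroup $\Delta$ is recoverable from the ring structure of $V/\cp$ as nonzero elements modulo units under divisibility, so any ring isomorphism between two such quotients induces an isomorphism of the corresponding $\Delta$'s, which by the preceding theorem forces them into the same class.

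The main obstacle is promoting the Fornasiero representation of $V$ as a truncation-closed \emph{subring} of $k_\d\llbracket t^{\prod_\d\omega}\rrbracket_{<\omega_1}$ to a genuine \emph{isomorphism}; this rests on the uniqueness up to isomorphism, under $\mathcal{CH}$, of $\aleph_1$-saturated Henselian valuation rings of cardinality $\aleph_1$ in the AKE-elementary class of the $\pea$'s, applied to both sides. Once this is in hand, the direct sum from earlier identifies $V/\cp$ cleanly with the Hahn ring over $\Delta$, and the five-way classification completes the proof.
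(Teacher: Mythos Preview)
Your proposal is correct and follows essentially the same approach as the paper; indeed the paper's own proof consists of the single word ``Done,'' since the entire argument---the isomorphism $\zhat/\fp\cong V/\cp$, the $\mathcal{CH}$-based identification of $V$ with $k_\d\llbracket t^{\prod_\d\omega}\rrbracket_{<\omega_1}$ via Ax--Kochen--Ershov and $\aleph_1$-saturation, the direct-sum decomposition giving $V/\cp\cong k_\d\llbracket t^\Delta\rrbracket_{<\omega_1}$, and the five-way classification of $\Delta$---is already laid out in the discussion preceding the theorem. Your explicit remark on pairwise non-isomorphism (recovering $\Delta$ as the value semigroup) is a useful addition that the paper leaves implicit.
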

 \proof
 Done
  \endproof
 
 \noindent
\emph{Note.} (i) occurs only for $\fp$ maximal. (iii) occurs for $\fp$ minimal, but for many other primes.

 If we drop $\mathcal{CH}$,  the only thing we can ensure is that $\Delta$, the value semigroup  of $\zhat/\fp$, is elementary equivalent to $\N$, hence we get the following
 \begin{thm}\label{quotients} Suppose $\d$ nonprincipal and $\fp$ associated to $\d$. Then, 
 
   \emph{(i)} $\zhat/\fp\equiv k_\d$ if $\fp$ is maximal, and 
 
  \emph{(ii)} $\zhat/\fp\equiv k_\d\tv{t}$ if $\fp$ nonmaximal.

 \end{thm}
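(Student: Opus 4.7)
The plan is to reduce part (ii) to Ax--Kochen--Ershov for Henselian valuation rings of equal characteristic $0$, while part (i) is immediate from Section \ref{prodvaldom}.

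For (i), apply Theorem \ref{d*}(3) with $R_i = \pea$ and $\mu_i = p\pea$: since $\fp$ is maximal, $\fp = \d^*$ for $\d = \d_\fp$, hence $\zhat/\fp \cong \prod_\d \gp = k_\d$, and in particular $\zhat/\fp \equiv k_\d$.

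For (ii), set $V := \prod_\d \pea$. The analysis in Section \ref{R/p} identifies $\zhat/\fp$ with $V/\cp$, where $\cp$ is the prime of $V$ cut out by a convex subsemigroup $\Delta$ of $\Gamma := \prod_\d \omega$; here $\Delta \neq \{0\}$ precisely because $\fp \neq \d^*$ (so $\cp \neq \mu$). I would then verify that $V/\cp$ is a Henselian valuation ring of equal characteristic $0$: $V$ itself is a Henselian valuation ring by \L os (Henselianity and the valuation axioms being first-order), the quotient by the prime $\cp \subseteq \mu$ remains a valuation ring with the same residue field $k_\d$, which has characteristic $0$ because $\d$ is nonprincipal, and Henselianity transfers from $V$ to $V/\cp$ by lifting a monic polynomial and a simple residue root from $V/\cp$ to $V$, solving in $V$, and reducing modulo $\cp$. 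The value group of $V/\cp$ is the nontrivial ordered abelian group $\widetilde\Delta$.

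By the earlier lemma, every such $\Delta$ is the nonnegative part of a model of Presburger arithmetic, so $\widetilde\Delta \equiv \Z$ as ordered abelian groups. The comparison ring $k_\d\tv{t}$ is a complete, hence Henselian, valuation ring of equal characteristic $0$ with residue field $k_\d$ and value group $\Z$. Ax--Kochen--Ershov then yields that $V/\cp$ and $k_\d\tv{t}$ are elementarily equivalent as Henselian valued fields; since the valuation on a valuation ring is ring-definable (via $v(a) \leq v(b) \iff a \mid b$), this is the same as elementary equivalence as rings, giving $\zhat/\fp \equiv k_\d\tv{t}$.

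The only mild technical point is translating Ax--Kochen--Ershov from the valued-field formulation to a ring-level statement and transferring Henselianity across the quotient $V \to V/\cp$; both are standard, and I do not expect a genuine obstacle.
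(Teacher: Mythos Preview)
Your proof is correct and follows the same route as the paper: identify $\zhat/\fp$ with $\prod_\d\pea/\cp$, note that this quotient is a Henselian valuation ring with residue field $k_\d$ and value semigroup $\Delta\equiv\N$, and invoke Ax--Kochen--Ershov. The paper's proof is a single sentence that leaves all of these steps implicit (Henselianity of the quotient, $\widetilde\Delta\equiv\Z$, the ring-versus-valued-field translation), so your write-up is simply a fleshed-out version of the same argument rather than a different approach.
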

 \proof  Clear from Ax-Kochen-Ershov theorem since $\d$ is nonprincipal, and  $\zhat/\fp\cong \prod_\d\pea/\cp$, the latter being Henselian.
 \endproof
 
 \begin{cor}\label{invertible}  If $\d\in\be\P$ in nonprincipal and $\fp\spec\zhat$ is in the chain of $\d$ then  the integers are invertible in $\zhat/\fp$.
 \end{cor}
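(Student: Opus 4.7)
The plan is to reduce the statement to an elementary-equivalence verification using Theorem \ref{quotients}, which is available since $\fp$ sits in the chain of a nonprincipal $\d$. For each fixed nonzero integer $n$, the property ``$n$ is invertible'' is expressed by the first-order sentence $\exists y\, (\underbrace{1+1+\cdots+1}_{n})\cdot y = 1$, so it is preserved under elementary equivalence of rings. Hence it suffices to check the property in the two model rings appearing in Theorem \ref{quotients}: namely $k_\d = \prod_\d \gp$ (the case $\fp$ maximal) and $k_\d\tv{t}$ (the case $\fp$ nonmaximal).

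First I would verify that $k_\d$ has characteristic $0$. For any fixed $q\in\P$ the set $\{p\in\P:p\not=q\}$ is cofinite, hence lies in the nonprincipal ultrafilter $\d$; on this set $q$ is a unit of $\gp$, so by \L os's theorem the image of $q$ is a unit in $k_\d$. Since this holds for every prime $q$, and a positive integer is a product of primes, every nonzero integer is invertible in $k_\d$. Consequently every ring elementarily equivalent to $k_\d$ has the same property, settling the maximal case.

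For the nonmaximal case I would recall that $k_\d\tv{t}$ denotes the formal power series ring $k_\d[[t]]$. Any element of $k_\d[[t]]$ with nonzero constant term is a unit (the usual geometric-series inversion), and the integers sit diagonally with constant term equal to their image in $k_\d$, which is a unit by the previous paragraph. Therefore the integers are invertible in $k_\d\tv{t}$, and again elementary equivalence transfers this to $\zhat/\fp$.

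The argument has no real obstacle; the only point requiring care is the appeal to Theorem \ref{quotients}, whose hypothesis that $\d$ is nonprincipal is exactly what is given. Strictly speaking, for each fixed $n$ one uses a single first-order sentence, so the reduction via $\equiv$ is legitimate one integer at a time.
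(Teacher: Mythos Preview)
Your argument is correct and is exactly the intended one: the paper states this as an immediate corollary of Theorem~\ref{quotients}, and you have spelled out precisely that deduction---invertibility of each nonzero integer is a first-order property, it holds in $k_\d$ (since $\d$ is nonprincipal) and hence in $k_\d\tv{t}$, and transfers along $\equiv$.
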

 
 \noindent
\emph{Remark.} We presume that in \emph{ZFC} alone the options for isomorphism types are mind-boggling.

\section{The localizations  $(\prod_{i\in I}V_i)_\fp$}\label{localization}
Our aim is  to understand the model theory of each ring of sections over open sets of $\spec (R)$, and the theories of the stalks. In Section 2 we discussed the case of $R=\prod_{i\in I} K_i$ where $K_i$ are fields.  We  now analyze the case $R=\prod_{i\in I} V_i$ where $V_i$ is a valuation ring, and in the case of  $\zhat$ we characterize up to elementary equivalence the local rings $\zhat_{\fp}$ for $\fp \in \spec (\zhat)$. 

Let $R=\prod_{i\in I}V_i$, with the $V_i$   valuation rings with fraction fields $K_i$, and  0 characteristic residue fields $k_i$. Let $\fp\in\spec R$ in the chain associated to a  $\d\in\be I$ and $V=\prod_\d V_i\cong \prod_{i\in I} V_i/ \d_*$, as above. We use the notation of previous sections, in particular $\cp=\fp/\d_*$.
\begin{lem} Consider the localization morphism $\tau:R\to R_\fp$. Then $\Ker\tau=\d_*$.
\end{lem}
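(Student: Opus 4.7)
The plan is to prove the two inclusions separately, using the facts from Section \ref{prodvaldom} that $\d_{*}\sub\fp\sub\d^{*}$ (because $\fp$ lies in the chain of $\d$, so $\d_{\fp}=\d$, and then Theorem~\ref{d*}(1.a) applies) together with the observation that each $V_{i}$ is a domain.

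For the inclusion $\d_{*}\sub\Ker\tau$, I would take $f\in\d_{*}$, put $X=\tv{f=0}\in\d$, and consider the idempotent $e_{X}\in R$. By construction $e_{X}\cdot f=0$, so it remains to verify $e_{X}\notin\fp$. Since $\tv{1-e_{X}=0}=X\in\d$, we have $1-e_{X}\in\d_{*}\sub\fp$; if $e_{X}\in\fp$ as well, then $1=(1-e_{X})+e_{X}\in\fp$, contradicting properness. Hence $e_{X}$ is a witness multiplier outside $\fp$ killing $f$ in $R_{\fp}$.

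For the inclusion $\Ker\tau\sub\d_{*}$, I would take $f\in\Ker\tau$ and pick $s\in R\setminus\fp$ with $sf=0$. Setting $Y=\tv{f=0}$ and $Z=\tv{s=0}$, the equation $s(i)f(i)=0$ in the domain $V_{i}$ forces, for every $i\in I$, either $i\in Y$ or $i\in Z$; that is, $Y\cup Z=I$. Because $\d_{*}\sub\fp$, the hypothesis $s\notin\fp$ gives $s\notin\d_{*}$, i.e.\ $Z\notin\d$. Hence $I\setminus Z\in\d$, and since $I\setminus Z\sub Y$, we conclude $Y=\tv{f=0}\in\d$, i.e.\ $f\in\d_{*}$.

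There is really no obstacle here: once one uses the characteristic-function idempotents for one direction and the domain property of the $V_{i}$ for the other, the only ingredient that has to be in place is the chain inequality $\d_{*}\sub\fp$, which is already established. A short remark on why this is the ``right'' kernel could be added: it shows that the localization $R_{\fp}$ factors through $R/\d_{*}\cong\prod_{\d}V_{i}$, which is the starting point for the subsequent analysis of $R_{\fp}$ as a localization of the valuation ring $\prod_{\d}V_{i}$ at $\cp=\fp/\d_{*}$.
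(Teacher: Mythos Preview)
Your proof is correct and follows essentially the same approach as the paper: an idempotent witness $e_{X}$ to show $\d_{*}\sub\Ker\tau$, and the domain property of the $V_{i}$ together with $\d_{*}\sub\fp$ for the reverse inclusion. The only cosmetic difference is that you exclude $e_{X}$ from $\fp$ via $1-e_{X}\in\d_{*}\sub\fp$, whereas the paper excludes it via $e_{X}\notin\d^{*}\supseteq\fp$; both are immediate.
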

\proof
Let $f\in\Ker\tau$. Then, there is a $g\in R\setminus\fp$ such
that $fg=0$. 
Since $g\not\in\fp$ we have $g\not\in\d_{*}$, and so $f\in\d_*$.
Conversely, let $f\in\d_{*}$
and hence  $X:=\tv{f=0}\in\d$. Then, $fe_{X}=0$ and $e_{X}\not\in\d_{*}$ and
so,  $e_{X}\not\in\d^{*}$, in particular
$e_{X}\not\in\fp.$ Hence $f$ is 0 in $R_{\fp}$.
\endproof
Thus the image of $R$ under $\tau$ is (naturally) isomorphic to  the valuation domain $V$. $R_\fp$ is generated over $V$ by adjoining all $1/f$ where $f\not\in\fp$. In particular $R_{\fp}$ is a subring of  $K:=\prod_\d K_i$, the  field of fractions of $V$. The field $K$ is a Henselian valued field, with  valuation ring $V$, valuation map $v: K \to\widetilde{\Gamma}\cup\{\infty\}$, and residue field $k$.

\begin{cor}\label{clocalization}  There is a natural isomorphism $R_\fp\cong V_\cp$. In particular
$R_{\d_*}\cong \prod_\d K_i$ and $R_{\d^*}\cong \prod_\d V_i$.
\end{cor}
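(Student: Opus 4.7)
The plan is to factor the localization morphism $\tau:R\to R_\fp$ through the quotient $R/\d_*\cong V$ using the preceding lemma, and then identify the resulting map as the localization of the domain $V$ at the prime $\cp=\fp/\d_*$. The universal property of localization will do the bulk of the work once two standard conditions are verified.

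First I would invoke $\Ker\tau=\d_*$ to factor $\tau$ as $R\twoheadrightarrow R/\d_*\cong V\stackrel{\bar\tau}{\longrightarrow}R_\fp$. By the standard prime-ideal correspondence for quotients, primes of $R$ containing $\d_*$ are in bijection with primes of $V$, sending $\fp$ to $\cp$; hence the composite $R\to V$ carries $R\setminus\fp$ precisely onto $V\setminus\cp$. Consequently $\bar\tau$ sends every element of $V\setminus\cp$ to a unit of $R_\fp$, and the universal property of $V_\cp$ produces a unique ring homomorphism $\phi:V_\cp\to R_\fp$ extending $\bar\tau$. For surjectivity of $\phi$, any element $f/g\in R_\fp$ with $g\notin\fp$ equals $\phi(\bar f/\bar g)$; for injectivity, if $\phi(\bar f/\bar g)=0$ then $\bar\tau(\bar f)=0$ in $R_\fp$, so there is $h\in R\setminus\fp$ with $fh=0$ in $R$, and reducing modulo $\d_*$ gives $\bar f\bar h=0$ in $V$ with $\bar h\notin\cp$, whence $\bar f/\bar g=0$ in $V_\cp$.

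For the two special cases, I would simply read off $\cp$ from the choice of $\fp$: when $\fp=\d_*$ one has $\cp=(0)$, and since $V$ is a domain (an ultraproduct of valuation domains), $V_{(0)}$ is its field of fractions, which is $\prod_\d K_i$ because fraction fields commute with ultraproducts of domains; when $\fp=\d^*$, the quotient $\d^*/\d_*$ is the maximal ideal of $V$, so $V_\cp=V=\prod_\d V_i$. I do not anticipate any substantive obstacle: the argument is essentially an application of the universal property of localization, and every identification required ($R/\d_*\cong V$, the correspondence of primes, the fact that $V$ is a valuation domain with fraction field $\prod_\d K_i$) has already been established in Section~\ref{prodvaldom} and Theorem~\ref{d*}.
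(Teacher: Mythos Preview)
Your proposal is correct and follows essentially the same route as the paper: both arguments factor $\tau$ through $V=R/\d_*$ via the preceding lemma and then use the observation that $g\notin\fp$ iff $g_\d\notin\cp$ to identify $R_\fp$ with the localization $V_\cp$; the paper records this in one line, whereas you spell out the universal-property verification and the two special cases explicitly.
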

\proof  Both rings contain a copy of $V$, and since $\d_*\sub \fp$ we have that $g\not\in \fp$ if and only if $g_{\d}\not\in\cp$.
\endproof

 \begin{cor}\label{localizationhenselian} Assume all the $V_i$ are Henselian.  Then $R_\fp$ is Henselian.
 \end{cor}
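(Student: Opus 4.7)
The plan is to reduce the question to the Henselianity of a coarsening of a valuation known to be Henselian. By Corollary \ref{clocalization}, the localization $R_\fp$ is naturally isomorphic to $V_\cp$, where $V=\prod_\d V_i$ and $\cp=\fp/\d_{*}$. So it suffices to show that $V_\cp$ is a Henselian valuation ring.

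First, I would use that ``being Henselian'' is a first-order property in the language of valued rings (for instance, in the formulation: every monic polynomial with coefficients in $V$ satisfying the usual Newton-approximation hypothesis has a root). Hence, by \L os's theorem, since each $V_i$ is Henselian, so is the ultraproduct $V=\prod_\d V_i$, viewed as a valuation ring of $K=\prod_\d K_i$ with value group $\widetilde\Gamma$.

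Next, I would identify $V_\cp$ explicitly as a coarsening of the valuation on $K$. Under the correspondence between prime ideals of $V$ and convex subgroups of $\widetilde\Gamma$, the prime $\cp$ corresponds to the convex subgroup $\widetilde\Delta$ generated by $\Delta$, and $V_\cp$ is precisely the valuation ring on $K$ associated to the composite valuation $w\colon K\to \widetilde\Gamma/\widetilde\Delta \cup\{\infty\}$ obtained by quotienting out $\widetilde\Delta$. Its maximal ideal is $\cp V_\cp$ and its residue field is the fraction field of $V/\cp$.

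Finally, I would appeal to the standard fact that any coarsening of a Henselian valuation is Henselian (see, e.g., Engler--Prestel, \emph{Valued Fields}). Since $(K,v)$ is Henselian by the previous step and $(K,w)$ is a coarsening, $(K,w)$ is Henselian, i.e., $V_\cp$ is a Henselian valuation ring; combining with the isomorphism $R_\fp\cong V_\cp$ completes the proof. The only mildly nontrivial ingredient here is the coarsening lemma, whose proof rests on the observation that a simple root modulo the larger maximal ideal $\cp V_\cp$ can be refined to a simple root modulo $\mu$ (using that $\cp V_\cp\supseteq \mu$ and that residues in $V_\cp/\cp V_\cp$ lift back to $V$), after which Henselianity of $v$ produces an actual root.
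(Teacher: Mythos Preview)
Your argument is correct and matches the paper's: reduce to $V_\cp$ via Corollary~\ref{clocalization}, note that $V=\prod_\d V_i$ is Henselian (by \L o\'s), and cite Engler--Prestel for the fact that any coarsening of a Henselian valuation is Henselian (the paper invokes their Corollary~4.1.4, which simultaneously yields that $V/\cp$ is Henselian). One minor slip in your closing heuristic: the inclusion goes the other way, $\cp V_\cp\subseteq\mu$, since the coarser valuation ring has the \emph{smaller} maximal ideal; this does not affect your proof, as you are citing the coarsening lemma rather than reproving it.
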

\proof Consider the  valuation 
$v_\cp\colon K\to\widetilde{\Gamma}/\widetilde{\Delta}\cup\{\infty\},$ corresponding to the coarsening 
$V_\cp\supseteq V $, and the corresponding valuation $\overline{v_\cp}:V_\cp/\cp\to\widetilde{\Delta}\cup\{\infty\}$ on the residue class field of $v_\cp$. Then both $V_\cp$ and $V/\cp$ are Henselian \cite[Corollary 4.1.4]{englerprestel}. \endproof

\begin{thm}\label{pain}  Let $\d$ be a nonprincipal ultrafilter on $\P$. Then  all the  local rings $\zhat_\fp$,   for $\d_*\subset\fp$ are elementary equivalent.
\end{thm}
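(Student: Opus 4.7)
The plan is to apply the Ax-Kochen-Ershov theorem (AKE) to the Henselian coarsenings $V_\cp$ corresponding (via Corollary~\ref{clocalization}) to the rings $\zhat_\fp$. Write $V=\prod_\d\pea$, $K=\prod_\d\ffpea$, and $\widetilde{\Gamma}=\prod_\d\Z$. The interesting case is $\fp$ strictly between $\d_*$ and $\d^*$ --- the case $\fp=\d^*$ gives $\zhat_\fp\cong V$, a discrete valuation ring, whose theory differs from that of the coarsenings. So $\cp$ is a nonzero non-maximal prime of $V$, corresponding to a proper nontrivial convex subgroup $\widetilde{\Delta}\subset\widetilde{\Gamma}$. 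By Corollary~\ref{localizationhenselian}, $V_\cp$ is Henselian with value group $\widetilde{\Gamma}/\widetilde{\Delta}$ and residue field $\mathrm{Frac}(V/\cp)$.

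The first claim is that $\widetilde{\Gamma}/\widetilde{\Delta}$ is a nontrivial divisible ordered abelian group, hence elementarily equivalent to $(\Q,+,<)$. Since $\widetilde{\Delta}$ is a nonzero convex subgroup of the $\Z$-group $\widetilde{\Gamma}$, it contains the smallest positive element $1$, so $\Z\subseteq\widetilde{\Delta}$. For any $\gamma\in\widetilde{\Gamma}$ and $n\geq 1$, the $\Z$-group property yields $\gamma=nq+r$ with $r\in\{0,\ldots,n-1\}\subseteq\widetilde{\Delta}$, so $\gamma+\widetilde{\Delta}=n(q+\widetilde{\Delta})$; density of the quotient order then follows from divisibility and nontriviality. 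The second claim is that $\mathrm{Frac}(V/\cp)\equiv k_\d((t))$ as a field: by Theorem~\ref{quotients}(ii) applied to $V/\cp\cong\zhat/\fp$, the ring $V/\cp$ is elementarily equivalent to $k_\d\tv{t}$, and elementary equivalence of domains lifts (via the uniform interpretation of the fraction field) to elementary equivalence of their fraction fields. In particular the residue field has characteristic $0$.

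With both invariants pinned down, AKE for Henselian valued fields of equal characteristic zero gives that all the valued fields $(K,V_\cp)$ share the same first-order theory. Since the theory of a valuation ring as a ring and the theory of the associated valued field interpret each other (the fraction field is interpretable from the ring as a domain, and the valuation ring is the definable substructure of non-fractional elements), the rings $V_\cp$ are pairwise elementarily equivalent, which gives $\zhat_{\fp_1}\equiv\zhat_{\fp_2}$ for any two $\fp_1,\fp_2$ in the relevant range.

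The main obstacle is verifying that $\widetilde{\Gamma}/\widetilde{\Delta}$ really is divisible and densely ordered for \emph{every} proper nontrivial convex subgroup $\widetilde{\Delta}$: the striking point is that despite the intricate menagerie of possible $\Delta$'s (catalogued under $\mathcal{CH}$ in Theorem~\ref{quotientsch}), passage to the quotient washes out all this structure and forces a single dense divisible theory. A secondary technicality is the clean translation between the pure ring language of the statement and the valued-field language of AKE, but this is standard.
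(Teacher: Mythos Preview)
Your argument follows the paper's line closely: pass to $V_\cp$ via Corollary~\ref{clocalization}, use Henselianity of the coarsening, compute the value group as $\widetilde\Gamma/\widetilde\Delta$, pin down the residue field, and invoke AKE. The one cosmetic variation is in the residue-field step: the paper argues directly that $V_\cp/\cp V_\cp$ is itself Henselian with value group $\widetilde{\Delta_\cp}$ (a model of Presburger) and residue field $k_\d=\prod_\d\gp$, then applies AKE a second time; you instead quote Theorem~\ref{quotients}(ii) for $V/\cp\equiv k_\d\tv{t}$ and lift to fraction fields. Since Theorem~\ref{quotients} is established by exactly that second AKE step, the two routes are equivalent.

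Where you differ substantively is at the endpoint $\fp=\d^*$, and here you are more accurate than the paper. You note that $\zhat_{\d^*}\cong V$ is discretely valued (value group the $\Z$-group $\widetilde\Gamma$), while every proper coarsening $V_\cp$ has divisible value group; so $V\not\equiv V_\cp$ (the existence of a uniformizer --- a non-unit dividing every non-unit --- is first-order). The paper's proof asserts without restriction that $\widetilde\Gamma/\widetilde{\Delta_\cp}$ is divisible and that $\widetilde{\Delta_\cp}$ is a model of Presburger, but both fail when $\cp=\mu$ (then $\widetilde{\Delta_\cp}=\{0\}$). Thus the statement, read literally as ranging over all $\fp\supsetneq\d_*$, is too strong; what is actually proved --- by the paper and by you --- is the case $\d_*\subsetneq\fp\subsetneq\d^*$. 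Your remark flagging $\fp=\d^*$ as a genuinely different elementary type is correct and worth keeping.
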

\proof Let $\fp$ be a prime associated to $\d$ and $\fp\not=\d_*$. Let $\cp$ be its image in $\prod_\d\Z_p$.  Let  $K=\prod_\d\Q_p$ and $V=\prod_\d\Z_p$. Since the  valued field $(K, V)$ is Henselian, so  $(K, V_\cp)$ and $(V_\cp/\cp V_\cp, V/\cp V_\cp)$ are (see \cite[Corollary 4.1.4]{englerprestel}). Its value groups are  respectively $\widetilde\Gamma\cong \prod_\d\Z$, $\widetilde\Gamma/\widetilde{\Delta_\cp}$ and $\widetilde{\Delta_\cp}$.   The  group $\widetilde\Gamma/\widetilde{\Delta_\cp}$ is an ordered divisible abelian group hence as $\fp$ varies, as above,  they are all elementary equivalent.   On the other hand, since $$(V/\cp V_\cp)/(\mu/\cp V_\cp)\cong V/\mu\cong \textstyle\prod_\d\gp,$$  hence, as $\fp$ as above, all the $V/\cp V_\cp$ have the same residue field. Therefore all the $V_\cp/\cp V_\cp$ are elementary equivalent. Finally since
 $\widetilde{\Delta_\cp}$ is a model of Presburger,  so as  $\fp$ varies, as above,   they are all elementary equivalent.  We can conclude that all the $V_\cp$ are elementary equivalent and so  the $\zhat_\fp$, as $\fp$ varies as above, are.
\endproof

\section{The ring of finite ad\`eles of $\Q$}

In what follows we will denote the ring of finite ad\`eles of $\Q\,$ $\adq$ by $\ad$, \emph{i.e.}
\begin{center}
$\ad:=\{ f\in\prod_{p\in\mathbb{P}}\Q_{p}:f(p)\in\pea\mbox{ for all but finitely many }p\in\P\}.$
\end{center}

The analysis  of $\spec\ad$ is easily obtained now from that of $\zhat$ via the isomorphism $\ad\cong\zhat_{\Z^{*}}$.  So we have a   bijection 
$$\spec\ad\rightarrow\{\fp\in\spec\zhat\st \fp\cap \zhat^*=\emptyset\}$$ mapping $\fq$ to $\fq\cap\zhat$ in an order preserving way (see\,\cite[Corollary 1.2.6]{bosch}. Therefore just as for $\zhat$  we have chains of prime ideals associated to ultrafilters of $\be\P$, the only difference is that if $\d$ is principal, say generated by $p$, the maximal ideal $\d^*$ of $\zhat$ extended to $\ad$ is  $\d^*\ad=\ad$ since $p$ is invertible in $\ad$ (see Corollary\,\ref{princ}), and so $\d_*\ad$ is both minimal and maximal in $\spec\ad$. In particular, we have that $\Min\ad=\{\d_*\ad: \d\in\be\P\}$, where $\d_*$ is the minimal prime ideal of $\zhat$ associated to $\d$, defined in  section\,\ref{prodvaldom}.
\begin{thm}

$(1)$  Any prime ideal of $\ad$ is contained in a unique maximal ideal.

$(2)$  Every maximal ideal of $\ad$ contains a unique minimal prime.

$(3)$ For any $\fq\in\spec\ad$, the set of ideals of $\ad$ containing $\fq$ is linearly ordered by inclusion.

\end{thm}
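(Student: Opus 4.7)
The plan is to transfer statements (1), (2), (3) from $\zhat$—where they were established in Corollary \ref{c1d*} and Theorem \ref{idealslo}—to $\ad$ via the order-preserving bijection
$$\phi\colon\spec\ad\longrightarrow\{\fp\in\spec\zhat\colon\fp\cap\Z^{*}=\emptyset\},\qquad\fq\mapsto\fq\cap\zhat,$$
coming from the isomorphism $\ad\cong\zhat_{\Z^{*}}$. Write $\fp:=\phi(\fq)$ and $\d:=\d_{\fp}$. The whole argument turns on a clean description of which chains of $\spec\zhat$ meet $\mathrm{Im}(\phi)$. If $\d$ is nonprincipal, the entire chain $(\d)_{*}\subseteq\cdots\subseteq(\d)^{*}$ lies in $\mathrm{Im}(\phi)$, because for any nonzero $n\in\Z$ the set $\tv{n\in\mu}=\{p\in\P\colon p\mid n\}$ is finite and hence $n\notin(\d)^{*}$. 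If $\d$ is principal, generated by some $\{p\}$, then by Corollary \ref{princ}(2) the chain is $\d_{*}=(1-e_{p})\subset\d^{*}=(pe_{\P})$, and only $\d_{*}$ survives to $\ad$ since $p\in\d^{*}\cap\Z^{*}$; in $\ad$ the image of $\d_{*}$ becomes simultaneously minimal and maximal.

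For (1), apply Corollary \ref{c1d*}(1) to obtain the unique maximal ideal $(\d)^{*}$ of $\zhat$ above $\fp$. In the nonprincipal case $(\d)^{*}\in\mathrm{Im}(\phi)$, so $\phi^{-1}((\d)^{*})$ is a maximal ideal of $\ad$ above $\fq$; any other maximal of $\ad$ above $\fq$ would pull back under $\phi$ to a maximal of $\zhat$ above $\fp$, forcing equality by uniqueness in $\zhat$. In the principal case, $\fp=\d_{*}$ is already maximal in $\ad$ by the discussion above and is trivially the unique maximal above itself. Statement (2) is symmetric: apply Corollary \ref{c1d*}(2) to get the unique minimal prime of $\zhat$ below the chosen maximal, and observe that $(\d)_{*}$ always lies in $\mathrm{Im}(\phi)$, since it is the kernel of the canonical map $\zhat\to\prod_{\d}\Z_{p}$ and therefore meets $\Z$ only in $0$.

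For (3), invoke the standard bijection between ideals of a localization $S^{-1}R$ and $S$-saturated ideals of $R$ disjoint from $S$, which is inclusion-preserving. With $R=\zhat$ and $S=\Z^{*}$, the ideals of $\ad$ containing $\fq$ embed order-preservingly into the set of ideals of $\zhat$ containing $\fp$. By Theorem \ref{idealslo} (applied to $\zhat=\prod_{p\in\P}\Z_{p}$, a product of valuation rings) the latter is linearly ordered by inclusion, and a subset of a chain is a chain.

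The only nontrivial bookkeeping is the principal/nonprincipal dichotomy for $\d_{\fp}$: one must verify that the unique maximal (respectively minimal) ideal of $\zhat$ supplied by the earlier corollaries actually lies in $\mathrm{Im}(\phi)$, since otherwise naive extension would collapse to $\ad$. Once this is dispatched using the finiteness of $\tv{n\in\mu}$ for standard $n$, everything else follows from the structural results already established for $\zhat$, and no genuinely new ingredient is needed.
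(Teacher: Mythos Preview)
Your proof is correct and follows the same strategy as the paper: the paper's proof is the single line ``(1) and (2) by Corollary~\ref{c1d*} and (3) by Theorem~\ref{idealslo}'', relying on the localization bijection $\spec\ad\to\{\fp\in\spec\zhat:\fp\cap\Z^{*}=\emptyset\}$ and the principal/nonprincipal dichotomy set up in the paragraph preceding the theorem, which is exactly the bookkeeping you spell out. One small wording issue: in your argument for (1) you assert that a maximal ideal of $\ad$ pulls back to a \emph{maximal} ideal of $\zhat$; this is true in the nonprincipal case, but only because you have already shown the entire chain above $\fp$ lies in $\mathrm{Im}(\phi)$ (so that maximality in $\mathrm{Im}(\phi)$ coincides with maximality in $\spec\zhat$), and it would be cleaner to say so explicitly rather than leave it implicit.
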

\begin{proof}
(1) and (2) by Corollary\,\ref{c1d*} and (3) by Theorem\,\ref{idealslo}.
\end{proof}
Let $\fq\in\spec\ad$, $\fp=\fq\cap\zhat\in\spec\zhat$ and $\d$ the ultrafilter associated to $\fp$.  

(a) If $\fp$ is principal we get $\ad/\fq\cong\Q_p$ for some $p\in\P$. If $\fp$ is nonprincipal, 
since $\ad/\fq\cong (\zhat/\fp)_{\Z^*}$, by Corollary\,\ref{invertible} $ (\zhat/\fp)_{\Z^*}\cong \zhat/\fp$. Therefore  Theorem\,\ref{quotientsch} and Theorem\,\ref{quotients} apply if we substitute $\zhat/\fp$ by  $\ad/\fq$. 

(b)  $\ad_\fq$ is isomorphic to $(\zhat_\fp)_{\Z^*}$, and since $\fp\cap\Z^*=\emptyset$ we get $\ad_\fq\cong\zhat_\fp$. Therefore,
  Corollary\,\ref{clocalization}  and Theorem\,\ref{pain}   hold for  $\ad_\fq$ too.

\bibliographystyle{plain}
\bibliography{biblio}
\end{document}